\documentclass[a4paper]{amsart}
\pdfoutput=1
\usepackage[latin1]{inputenc}
\usepackage[l2tabu, orthodox]{nag}
\usepackage{amsfonts,enumerate,mathtools}

\usepackage{amssymb}
\usepackage{amsthm}
\usepackage{amsmath}
\usepackage{amsxtra}
\usepackage{array}
\usepackage{booktabs}
\usepackage{mathrsfs}
\usepackage{multirow}
\usepackage{tikz-cd}
\usepackage[all]{xy}

\usepackage[pagebackref,breaklinks]{hyperref}

\newcommand{\into}{\hookrightarrow}

\newcommand{\RR}{\mathbf{R}}
\newcommand{\CC}{\mathbf{C}}
\newcommand{\QQ}{\mathbf{Q}}
\newcommand{\QQbar}{\overline{\QQ}}
\newcommand{\Qlbar}{\overline{\QQ}_\ell}
\newcommand{\ZZ}{\mathbf{Z}}

\renewcommand{\AA}{\mathbf{A}}

\newcommand{\AF}{\AA_F}
\newcommand{\AEf}{\AA_{E, \mathrm{f}}}
\newcommand{\AFf}{\AA_{F, \mathrm{f}}}

\newcommand{\cO}{\mathcal{O}}
\newcommand{\cT}{\mathcal{T}}
\newcommand{\cS}{\mathcal{S}}

\newcommand{\frakH}{\mathfrak{H}}

\newcommand{\fd}{\mathfrak{d}}
\newcommand{\fp}{\mathfrak{p}}
\newcommand{\fq}{\mathfrak{q}}
\newcommand{\fN}{\mathfrak{N}}
\newcommand{\fm}{\mathfrak{m}}

\newcommand{\uk}{\underline{k}}
\newcommand{\ut}{\underline{t}}

\DeclareMathOperator{\GL}{GL}
\DeclareMathOperator{\SL}{SL}
\DeclareMathOperator{\Cl}{Cl}
\DeclareMathOperator{\Nm}{Nm}
\DeclareMathOperator{\tr}{tr}
\DeclareMathOperator{\Res}{Res}
\DeclareMathOperator{\Frob}{Frob}

\DeclareMathOperator{\Gal}{Gal}
\DeclareMathOperator{\GO}{GO}
\DeclareMathOperator{\GSO}{GSO}
\DeclareMathOperator{\Ind}{Ind}

\DeclareMathOperator{\nrd}{nrd}


\newcommand{\tbt}[4]{\begin{pmatrix}#1 & #2 \\ #3 & #4 \end{pmatrix}}
\newcommand{\stbt}[4]{
 \left(\begin{smallmatrix}#1 & #2 \\ #3 & #4\end{smallmatrix}\right)
}

\newcommand{\eps}{\varepsilon}
\newcommand{\fn}{\mathfrak{n}}
\newcommand{\Sym}{\operatorname{Sym}}

\newtheorem{definition}{Definition}[section]
\newtheorem{theorem}[definition]{Theorem}
\newtheorem{proposition}[definition]{Proposition}
\newtheorem{conjecture}[definition]{Conjecture}
\newtheorem{lemma}[definition]{Lemma}
\newtheorem{corollary}[definition]{Corollary}

\theoremstyle{remark}
\newtheorem{remark}[definition]{Remark}
\newtheorem*{notation}{Notation}

\numberwithin{table}{section}

\begin{document}

\title{Non-paritious Hilbert modular forms}

\author{Lassina Demb\'el\'e}
\address[Demb\'el\'e]{Max-Planck Institute for Mathematics, Vivatsgasse 7, Bonn 53111, Germany}
\email{lassina.dembele@gmail.com}
\thanks{At various stages of this project, LD was supported by EPSRC Grant EP/J002658/1 and a Visiting Scholar grant from the Max-Planck Institute for Mathematics}

\author{David Loeffler}
\address[Loeffler]{Mathematics Institute, Zeeman Building, University of Warwick, Coventry CV4 7AL, UK. ORC ID: 0000-0001-9069-1877.}
\email{d.a.loeffler@warwick.ac.uk}
\thanks{DL was supported by a Royal Society University Research Fellowship.}

\author{Ariel Pacetti}
\address[Pacetti]{Leverhulme Trust visiting Professor, University of Warwick, Coventry CV4 7AL, UK.}
\email{apacetti@famaf.unc.edu.ar}
\thanks{AP was supported by a Leverhulme Trust Visiting Professorship}

\keywords{Hilbert modular forms, Galois representations}
\subjclass[2010]{Primary: 11F41; Secondary: 11F80}

\begin{abstract}
 The arithmetic of Hilbert modular forms has been extensively studied under the assumption that the forms concerned are ``paritious'' -- all the components of the weight are congruent modulo 2. In contrast, non-paritious Hilbert modular forms have been relatively little studied, both from a theoretical and a computational standpoint. 
 
 In this article, we aim to redress the balance somewhat by studying the arithmetic of non-paritious Hilbert modular eigenforms. On the theoretical side, our starting point is a theorem of Patrikis, which associates \emph{projective} $\ell$-adic Galois representations to these forms. We show that a general conjecture of Buzzard and Gee actually predicts that a strengthening of Patrikis' result should hold, giving Galois representations into certain groups intermediate between $\GL_2$ and $\operatorname{PGL}_2$; and we verify that the predicted Galois representations do indeed exist. On the computational side, we give an algorithm to compute non-paritious Hilbert modular forms using definite quaternion algebras. To our knowledge, this is the first time such a general method has been presented. We end the article with an example.
\end{abstract}

\maketitle
\setcounter{tocdepth}{1}
\tableofcontents

\section*{Introduction}

 \subsection*{Background} Let $G$ be a reductive group over a number field $F$. One of the key themes of the Langlands programme is that ``sufficiently nice'' automorphic representations of $G$ should give rise to $\ell$-adic Galois representations, for any prime $\ell$. However, translating this idea into a formal statement is surprisingly difficult, and a precise formulation of such a conjecture has only recently been given by Buzzard and Gee in \cite{buzzardgee}.
 
 In \emph{op.cit.}, they define a class of automorphic representations $\Pi$ of $G$ which are ``$L$-algebraic''; and their conjecture predicts that if $\Pi$ is $L$-algebraic, then for every prime $\ell$ (and isomorphism $\CC \cong \Qlbar$), there should be a continuous representation of $\Gal(\overline{F} / F)$ with values in the Langlands $L$-group ${}^L G(\Qlbar)$, whose restrictions to the decomposition groups at good primes $v$ are determined by the corresponding local factors $\Pi_v$ of $\Pi$. (We shall recall the statement of this conjecture in more detail below.)
 
 One natural testing ground for this conjecture is provided by Hilbert modular forms. As noted in \emph{op.cit.}, if $F$ is a totally real number field, and $f$ is a Hilbert modular form for $\GL_2 / F$, then the automorphic representation $\Pi$ associated to $f$ is $L$-algebraic (after a suitable twist) if and only if the weight of $f$ is ``paritious'' (all of its components $k_\sigma$ are congruent modulo 2). It is well-known that paritious Hilbert eigenforms have associated 2-dimensional $\ell$-adic Galois representations, confirming the Buzzard--Gee conjecture in this case. 
 
 On the other hand, there are also eigenforms that are non-paritious. These do not have 2-dimensional Galois representations; however, Patrikis \cite{patrikis-sign} showed\footnote{Patrikis' result is actually considerably more general, applying to regular algebraic, essentially self-dual cuspidal automorphic representations of $\GL_n$ over totally real fields. However, we shall consider only the $n = 2$ case in the present paper.} one can associate 2-dimensional \emph{projective} $\ell$-adic Galois representations to such forms. This is wholly consistent with the Buzzard--Gee conjecture: the group $\operatorname{PGL}_2$ is the Langlands dual of $\SL_2$, and one checks that non-paritious eigenforms give rise to automorphic representations of $\GL_2$ which are not $L$-algebraic, but become $L$-algebraic when restricted to $\SL_2$. This has inspired us to begin a more general study of non-paritious Hilbert modular forms, both from a theoretical and a computational viewpoint; as far as we are aware, the problem of computing non-paritious forms explicitly has not been considered before.
 
 \subsection*{Goals of this article}
 
 The goals of the present article are the following. 
 \begin{enumerate}
  
  \item We introduce a hierarchy of conditions on the weight $(\uk, \ut)$ of a Hilbert modular automorphic representation $\Pi$ for $\GL_2 / F$, depending on a choice of a subfield $E \subseteq F$; we call such weights ``$E$-paritious''. (If $E = F$, this is the usual parity condition that all the $k_\sigma$ are  congruent modulo 2. If $E = \QQ$ it is no condition at all, i.e.~every $\Pi$ is $\QQ$-paritious). We define a subgroup $G^*$ of the restriction of scalars $G \coloneqq \Res_{F / E} \GL_2$, containing $\Res_{F / E} \SL_2$; and we show that if $\Pi$ is $E$-paritious, the restriction of $\Pi$ to $G^*(\AA_E)$ \textbf{is} $L$-algebraic after a suitable twist.
   
  \item We shall demonstrate that, as predicted by the Buzzard--Gee conjecture, we may associate $\ell$-adic representations of $\operatorname{Gal}(\overline{E} / E)$ to $E$-paritious automorphic representations of $\GL_2 / F$, taking values in the Langlands $L$-group of the group $G^*$ defined in (1). Since our group $G^*$ always strictly contains $\Res_{F / E}(\SL_2)$, whose Langlands dual is $\Res_{F / E}(\operatorname{PGL}_2)$, this result refines Patrikis' construction of projective Galois representations.
  
  \item We describe algorithms for computing non-paritious Hilbert modular forms, via the Jacquet--Langlands correspondence between $\GL_2$ and totally definite quaternion algebras.
  
  \item We give an explicit example of non-paritious Hilbert modular forms computed using these algorithms, and describe the conjugacy classes of Frobenius elements in their associated Galois representations.
 \end{enumerate}

 The article is organized as follows: in Section $1$ we state
 Buzzard-Gee conjecture, and make a small detour through the concepts
 involved. Section $2$ is about Hilbert modular forms: we recall their
 automorphic definition, and we prove that if a non-paritious Hilbert
 modular form is $E$-paritious (see
 Definition~\ref{definition:E-paritious}) then we can restrict it to
 an automorphic form of $G^* = G \times_{(\Res_{F / E} \GL_1)} \GL_1$
 (as predicted by Buzzard-Gee). Section $3$ contains the main theorem
 (Theorem~\ref{thm:representation}), namely that non-paritious Hilbert
 modular forms, do have Galois representations attached to them, as
 predicted. Section $4$ relates our construction with Patrikis'
 one. In Section $5$ we focuss on real quadratic fields, where some
 exceptional isomorphism allows the Galois representation to land in
 $\GO_4$. In Section $6$ we show how to use quaternion groups to
 compute Hilbert modular forms (paritious and non-paritious ones). In
 particular, in Theorem~\ref{thm:fibers} and
 Corollary~\ref{corollay:algorithm} we prove how from automorphic
 forms for the quaternion group $H$ we can construct forms in
 $H^*$. This is the key result for computational purposes. In the same
 section we explain how to compute the Hecke action on such forms. We
 end the article with one illustrative example.  The code used is available at \url{https://warwick.ac.uk/fac/sci/maths/people/staff/david_loeffler/research/nonparitious/}

 \subsection*{Notation} Throughout the article, we use the following notations:
 \begin{itemize}
 \item $F$ denotes a number field. (In \S \ref{sect:lgroups} $F$ can be arbitrary, but from \S \ref{sect:HMF} onwards we shall assume $F$ to be totally real.)
 
 \item $\cO_F$ denotes the ring of integers of $F$, $\cO_F^\times$ the unit group, and $\cO_F^{\times +}$ the subgroup of totally positive units.
 
 \item $\AA_F$ is the ad\`ele ring of $F$. 
    
 \item $\Cl^+(F)$ denotes the narrow class group of $F$.

 \item $\Gamma_F$ denotes the Galois group $\Gal(\overline{F}/F)$.
 
 \item $E$ will denote a subfield of $F$, and the notations $\cO_E$, $\Gamma_E$ etc have the same meanings as for $F$.
   
 \end{itemize}

 \subsection*{Acknowledgements} It is a pleasure to thank the two authors of the conjecture we are studying: firstly, Kevin Buzzard for several helpful remarks, and in particular for pointing us towards the work of Blasius--Rogawski which is the key input to constructing the required Galois representations; and secondly, Toby Gee, for making us aware of the related work of Patrikis. We are also grateful to Stefan Patrikis for his comments on an earlier version of this paper.

\section{L-groups}
 \label{sect:lgroups}
 
 In this section we'll recall from \cite{buzzardgee} the necessary notions to formulate their conjecture relating automorphic representations and Galois representations; and we will check the compatibility of their conjecture with restriction of scalars.
 
 \subsection{Global definitions}
 
  Let $G$ be a connected reductive group over a number field $F$. The \emph{Langlands dual} $\hat G$ is the connected reductive group $\hat G$ over $\QQbar$ whose root datum is dual to that of $G$. The Galois group $\Gamma_F = \Gal(\overline{F} / F)$ acts naturally on $\hat G$, and the \emph{Langlands $L$-group} ${}^L G$ is the pro-algebraic group over $\QQbar$ defined as the semidirect product $\hat G \rtimes \Gamma_F$. See \cite[\S 2.1]{buzzardgee} for details. If $G$ is split over $F$ (or is an inner form of a split group) the action of $\Gamma_F$ on $\hat G$ is trivial, so ${}^L G$ is a direct product.
  
  We shall be interested in continuous homomorphisms $\rho: \Gamma_F \to {}^L G(M)$, for various fields $M$, satisfying the following condition: \emph{the composite of $\rho$ with the projection ${}^L G(M) \to \Gamma_F$ is the identity map on $\Gamma_F$.} Such a morphism is called an \emph{admissible homomorphism}, or sometimes \emph{$L$-homomorphism}. More generally, if $\Gamma' \subseteq \Gamma_F$ is a subgroup, we define a homomorphism $\Gamma' \to {}^L G(M)$ to be admissible if its projection to $\Gamma_F$ is the inclusion map $\Gamma' \into \Gamma_F$.
  
  \begin{notation}
   If $H_1$ and $H_2$ are two reductive groups over $F$, then the Langlands $L$-group ${}^L(H_1 \times H_2)$ is the fibre product ${}^L H_1 \times_{\Gamma_F} {}^LH_2$; for $r_1: \Gamma_F \to {}^L H_1$ and $r_2: \Gamma_F \to {}^L H_2$ admissible homomorphisms, we write $r_1 \times r_2: \Gamma_F \to {}^L(H_1 \times H_2)$ for their product.
  \end{notation}
 
 \subsection{Local theory}
 
  If $v$ is a finite place of $F$ at which $G$ is unramified (i.e.\ $G$ is quasi-split over $F_v$ and becomes split over an unramified extension of $F_v$), then there is a parametrisation of unramified representations of $G(F_v)$ in terms of \emph{Langlands--Satake parameters}. We choose an embedding $\overline{F} \into \overline{F_v}$, so we can identify $\Gamma_{F_v}$ with a subgroup of $\Gamma_F$. Then a Langlands--Satake parameter is a $\hat{G}(\CC)$-conjugacy class of admissible homomorphisms
  \[ s_v: W_{F_v} \to {}^L G(\CC) \]
  whose projection to $\hat G(\CC) \rtimes \Gal(F_v^{\mathrm{nr}}/F_v)$ factors through $W_{F_v} / I_{F_v}$, where $I_{F_v}$ is the inertia group, and satisfies a certain semisimplicity condition. (Note that this projection is well-defined, since the action of the inertia group $I_v$ on $\hat G(\CC)$ is trivial by assumption.) 
  
  If $\Gamma_{F_v}$ acts trivially on $\hat G$ -- equivalently, if $G$ is split over $F_v$ -- then $s_v$ is entirely determined by the conjugacy class of the projection to $\hat G(\CC)$ of $s_v(\Frob_v)$. This semisimple conjugacy class in $\hat G(\CC)$ is referred to simply as a \emph{Satake parameter}.
  
  As explained in \cite[\S 2.2]{buzzardgee}, there is a bijection between isomorphism classes of irreducible unramified representations of $G(F_v)$, and Langlands--Satake parameters.
 
 \subsection{The Buzzard--Gee conjecture}
 
  Let $\Pi = \bigotimes' \Pi_v$ be an automorphic representation of $G(\AA_F)$. Then the local factor $\Pi_v$ is unramified for almost all $v$, so we have a collection of Satake parameters $(s_v)_{v \notin \Sigma}$, where $\Sigma$ is a finite set.
  
  On the other hand, we also have a \emph{Harish--Chandra parameter} for each infinite place $\sigma$ of $F$, which is a Weyl group orbit\footnote{If $\sigma$ is a complex place then there is a small subtlety in that $\lambda_\sigma$ actually depends not only on the place $\sigma$ but also on a choice of isomorphism $F_\sigma \cong \CC$; but replacing this isomorphism with its conjugate changes $\lambda_\sigma$ by an element of $X_\bullet(\hat T)$, so the notion of $L$-algebraicity is well-defined. However, in this paper we shall mostly restrict to the case of totally real $F$ where this subtlety does not arise.} $\lambda_\sigma \in X_\bullet(\hat T) \otimes \CC$, where $\hat T$ is a maximal torus in $\hat G$. 
  
  \begin{definition}
   We say $\Pi$ is \emph{$L$-algebraic} if $\lambda_\sigma \in X_\bullet(\hat T)$ for every infinite place $\sigma$.
  \end{definition}
  
  \begin{conjecture}[{\cite[Conjectures 3.1.1 \& 3.2.1]{buzzardgee}}]
   Suppose $\Pi$ is an $L$-algebraic automorphic representation of $G(\AA_F)$. Then there is a finite extension $E / \QQ$ such that the Satake parameters $r(\Pi_v)$ are all defined over $E$; and for any prime $\ell$ and choice of embedding $\iota: E \into \Qlbar$, there exists an admissible homomorphism
   \[ r_\Pi: \Gamma_F \to {}^L G(\Qlbar) \]
   such that the restriction of $r_\Pi$ to $\Gamma_{F_v}$ is conjugate to $\iota(s_v)$ for every prime $v \notin \Sigma$ such that $v \nmid \ell$.
  \end{conjecture}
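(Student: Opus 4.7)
The final displayed statement is the Buzzard--Gee conjecture itself, quoted verbatim from \cite{buzzardgee}, rather than a new result of the paper. A proof in the stated generality is out of reach: this is a sweeping open conjecture covering every reductive group over every number field, and subsumes a large portion of the global Langlands programme. What can realistically be sketched is a strategy for the classes of $G$ treated in the sequel — namely the intermediate subgroups $G^*$ of $\Res_{F/\QQ} \GL_2$ sitting between $\Res_{F/\QQ} \SL_2$ and $\Res_{F/\QQ} \GL_2$ that arise from non-paritious Hilbert modular forms.

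For those groups my plan would be a three-step reduction. First, handle the base case $G = \Res_{F/\QQ} \GL_2$: once $\Pi$ is $L$-algebraic, a suitable central twist makes the Hilbert weight paritious, and the conjectural $r_\Pi : \Gamma_F \to \GL_2(\Qlbar)$ is constructed classically via \'etale cohomology of Shimura curves attached to quaternion algebras, with Frobenius eigenvalues matched to Satake parameters through the Eichler--Shimura congruence relation (Carayol, Taylor, and others). Second, invoke the construction of Blasius--Rogawski — which the introduction flags as the crucial input borrowed via Kevin Buzzard's suggestion — to attach a motive, and hence a compatible system of $\ell$-adic representations, to any cohomological $\GL_2/F$ automorphic representation, including those arising after an auxiliary base change used to trade a non-paritious form for a paritious one. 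Third, descend this construction along the inclusion $G^* \into \Res_{F/\QQ} \GL_2$: the restriction of a paritious representative of $\Pi$ to $G^*$ yields data valued in $\widehat{G^*}$ once one imposes the correct determinant condition coming from the central character of $\Pi$, and this descent is the step that refines Patrikis' projective representation.

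The main obstacle, as I read the outline, is verifying that the Galois representation so obtained genuinely lands in the intermediate $L$-group ${}^L G^*$ rather than only in the projective quotient $\Res_{F/\QQ} \operatorname{PGL}_2$ that Patrikis already produced. Concretely this requires constructing an auxiliary compatible system of Hecke characters with prescribed Hodge--Tate weights at each infinite place — weights that encode precisely \emph{how far} the weight of $f$ is from being paritious — and checking that twisting the Patrikis projective representation by this system lifts to $\widehat{G^*}$ with the correct Satake data at every unramified prime. The algebraicity of these auxiliary characters, together with matching the determinant on both sides of the unramified local Langlands correspondence, is where I expect the delicate work to concentrate; compatibility at ramified primes and at $\ell$ is not demanded by the conjecture as stated and can safely be deferred.
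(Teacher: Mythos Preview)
You are right that the displayed statement is a conjecture quoted from \cite{buzzardgee}; the paper offers no proof of it in the stated generality, so there is nothing to compare on that front. Your sketch of the special-case argument, however, diverges from what the paper actually does in Theorem~\ref{thm:representation}.

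The paper does \emph{not} proceed by lifting Patrikis' projective representation via an auxiliary twist. It runs in the opposite direction: for each imaginary quadratic $K$, Blasius--Rogawski supply a genuine $\GL_2(\Qlbar)$-valued representation of $\Gamma_M$ (with $M = FK$) attached to $\Pi$ twisted by a Gr\"ossencharacter $\chi$ of $M$. This is induced to an ${}^L G$-valued homomorphism of $\Gamma_{EK}$, projected to ${}^L G^*$, and the dependence on $\chi$ is then cancelled by tensoring with the inverse of the Galois character attached to $\chi|_{\AA_{EK}^\times}$ --- algebraic precisely because $\Pi$ is $E$-paritious. The resulting representation of $\Gamma_{EK}$ is extended across the index-$2$ inclusion $\Gamma_E \supset \Gamma_{EK}$ by an elementary group-theoretic lemma (Lemma~\ref{lemma:extension}), and finally patched over varying $K$ to pin down the quadratic ambiguity. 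The relation to Patrikis is established only \emph{a posteriori} in \S\ref{patrikis}, by composing with the quotient ${}^L G^* \to {}^L G^0$.

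So the auxiliary characters you anticipate do appear, but they live on the CM field $M$ rather than on $F$ or $E$, and their job is to make the base-changed $\GL_2$-representation $L$-algebraic over $M$, not to lift a projective representation. Your lifting route might be made to work via Tate's theorem, but controlling the Satake parameters of an abstract lift would still require the Blasius--Rogawski input, and the paper's direct construction avoids the lifting problem altogether. Note also that your ``base change to trade non-paritious for paritious'' is not quite accurate as stated: base change to a totally real field preserves the parity defect, and it is only the passage to a CM field together with the twist by $\chi$ that produces an $L$-algebraic object.
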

 
 \subsection{Weil restriction}
  
  We now check a compatibility property of the above conjecture. Let $E \subseteq F$ be number fields. Let $H$ be a reductive group over $F$, and let $G$ be the Weil restriction $\Res_{F / E} H$, which is a reductive group over $E$. Then $G(\AA_E)$ is canonically isomorphic to $H(\AA_F)$, and this isomorphism sends $G(E)$ to $H(F)$; so automorphic representations of $H(\AA_F)$ and of $G(\AA_E)$ are the same objects. However, the Buzzard--Gee conjecture for $H$ over $F$, and for $G$ over $E$, are apparently very different statements. In this section we shall check that the two statements are in fact equivalent.
  
  \begin{proposition}
  \label{prop:induceLhom}
  Let $E \subseteq F$ be number fields. Let $H$ be a reductive group over $F$, and let $G$ be the Weil restriction $\Res_{F / E} H$, which is a reductive group over $E$. Then:
  \begin{itemize}
   \item The dual group $\hat G$ is a product of $[F : E]$ copies of $\hat H$ indexed by the cosets $\Gamma_E / \Gamma_F$; in particular the subgroup $\Gamma_F$ preserves the first factor.
   \item The $L$-group ${}^LG$ is isomorphic to the semidirect product $\hat{G}\rtimes \Gamma_E$, with the natural action of $\Gamma_E$ on $\hat{G}$. 
   \item If $r: \Gamma_F \to {}^L H(\Qlbar)$ is an admissible homomorphism, there is an admissible homomorphism
   \[ \tilde r = \Ind_{F/E}(r): \Gamma_E \to {}^L G(\Qlbar)\]
   (uniquely determined up to conjugacy) such that the projection of $\tilde r |_{\Gamma_F}$ to the first factor of $\hat G$ is $r$.
  \end{itemize}
 \end{proposition}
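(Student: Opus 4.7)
The plan is to first unpack the structure of $\hat G$ by passing to a splitting field, and then to import the classical induction of Galois cocycles. For part (1), I would use the canonical isomorphism $G_{\overline E} \cong \prod_{\sigma : F \hookrightarrow \overline E} H^\sigma$ valid over $\overline E$; since dualising root data commutes with direct products, this gives $\hat G = \prod_{\sigma} \hat H$. Fixing a distinguished embedding $\sigma_0$ puts the index set in bijection with $\Gamma_E / \Gamma_F$, and the natural $\Gamma_E$-action on embeddings is carried over to the permutation action on cosets; the stabiliser of the trivial coset is exactly $\Gamma_F$, giving the ``first factor'' preservation statement. Note that elements of $\Gamma_F$ still act nontrivially on that first factor, via the $\Gamma_F$-action on $\hat H$ coming from ${}^L H$. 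Part (2) is then immediate from the definition of the $L$-group once the $\Gamma_E$-action on $\hat G$ has been described.

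For part (3), the construction is an explicit incarnation of Shapiro's lemma. Writing $r(\gamma) = (\phi(\gamma), \gamma)$ with $\phi : \Gamma_F \to \hat H(\Qlbar)$ a twisted $1$-cocycle, I would choose coset representatives $\gamma_1 = 1, \gamma_2, \ldots, \gamma_n$ for $\Gamma_E / \Gamma_F$. For $\gamma \in \Gamma_E$ and each $i$, the relation $\gamma \gamma_i = \gamma_{\pi_\gamma(i)} \delta_i(\gamma)$ determines a permutation $\pi_\gamma$ of $\{1, \ldots, n\}$ and elements $\delta_i(\gamma) \in \Gamma_F$. Define $\tilde r(\gamma) = (\tilde\phi(\gamma), \gamma) \in \hat G(\Qlbar) \rtimes \Gamma_E$ by taking the component of $\tilde\phi(\gamma)$ in the $i$-th factor to be $\phi(\delta_i(\gamma))$, relabelled by $\pi_\gamma$ to respect the $\Gamma_E$-action on factors. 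A direct check reduces the cocycle identity for $\tilde\phi$ to the one for $\phi$, and by construction the first-factor component of $\tilde r|_{\Gamma_F}$ recovers $r$ (since $\pi_\delta(1) = 1$ and $\delta_1(\delta) = \delta$ for all $\delta \in \Gamma_F$).

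For uniqueness up to $\hat G(\Qlbar)$-conjugation: any two admissible homomorphisms $\tilde r, \tilde r'$ satisfying the projection condition differ by a $1$-cocycle $\Gamma_E \to \hat G$ whose first-factor restriction to $\Gamma_F$ is trivial as a cocycle. The Shapiro isomorphism $H^1(\Gamma_E, \hat G) \cong H^1(\Gamma_F, \hat H)$, induced by projection to the first factor, then forces the whole cocycle to be a coboundary, which is the same as saying $\tilde r'$ is a $\hat G(\Qlbar)$-conjugate of $\tilde r$.

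The main technical obstacle is the bookkeeping in the semidirect product: one has to fix a convention for the $\Gamma_E$-action on $\hat G = \prod \hat H$ (permutation of factors composed with twisting by $\Gamma_F$) and verify that the induction formula above really satisfies the twisted cocycle relation when unfolded against $\pi_\gamma$ and the $\delta_i(\gamma)$. Once the conventions are pinned down, the verification is mechanical and follows from the cocycle identity for $\phi$ together with the multiplicativity of $\gamma \mapsto \pi_\gamma$ on cosets.
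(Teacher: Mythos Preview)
Your construction of $\tilde r$ is essentially identical to the paper's: both choose coset representatives $\gamma_1,\ldots,\gamma_n$ for $\Gamma_E/\Gamma_F$ and define the $i$-th component of $\tilde r(\gamma)$ to be $r$ applied to the element of $\Gamma_F$ measuring how $\gamma$ moves the coset $\gamma_i\Gamma_F$. The paper phrases this abstractly (for an arbitrary finite-index inclusion $V\le U$ and target group $H$, building $\tilde\rho:U\to H^{U/V}\rtimes U$), whereas you write it in twisted-cocycle language, but the formula and the verification are the same.

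One point where you go beyond the paper: you sketch the uniqueness up to conjugacy, which the paper's proof does not address at all. Your idea of invoking Shapiro for the induced $\Gamma_E$-group $\hat G=\operatorname{Ind}_{\Gamma_F}^{\Gamma_E}\hat H$ is the right one, but be careful with the phrasing ``differ by a $1$-cocycle whose first-factor restriction is trivial'': in nonabelian $H^1$ the difference of two cocycles is not itself a cocycle. What you actually want to say is that two admissible lifts correspond to two classes in the pointed set $H^1(\Gamma_E,\hat G)$ having the same image under the Shapiro bijection $H^1(\Gamma_E,\hat G)\xrightarrow{\sim} H^1(\Gamma_F,\hat H)$, hence they coincide as classes, which is precisely conjugacy by an element of $\hat G(\Qlbar)$. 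With that adjustment the argument is fine.
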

 
 \begin{remark}
  This proposition takes a particularly simple form if $H$ is split over $F$ (or is an inner form of a split group). In this case the action of $\Gamma_F$ on $\hat H$ is trivial, so ${}^L H$ is a direct product; and an admissible homomorphism $\Gamma_F \to {}^L H (\Qlbar)$ is simply a homomorphism $\Gamma_F \to \hat H(\Qlbar)$. Meanwhile, $\hat G \cong \prod_{x \in \Gamma_E / \Gamma_F} \hat H$, with $\Gamma_E$ acting by permuting the factors via its left action on $\Gamma_E / \Gamma_F$. 
 
  In this situation, if $r$ is an $L$-homomorphism $\Gamma_F \to {}^L H(\Qlbar)$, and $\rho: \hat H \to \GL_m$ is a representation of $\hat H$, then there is a natural representation $\tilde \rho: {}^L G \to \GL_{[F : E] m}$ whose restriction to the identity component $\hat G$ is given by $\rho \times \dots \times \rho$; and the composite $\tilde \rho \circ \tilde r$ is the induced representation $\Ind_{\Gamma_F}^{\Gamma_E} (\rho \circ r)$ in the usual sense. This justifies the notation ``$\Ind_{F / E}(r)$'' for this homomorphism $\tilde r$.
 \end{remark}

 \begin{proof}[Proof of Proposition \ref{prop:induceLhom}]
  The first two statements of the proposition are standard. We give an outline of the construction of the homomorphism $\tilde r$. 
  
  It is convenient to work in a slightly more general setting: let $V$ be an arbitrary group, and $\rho: V \to H$ a homomorphism. Suppose $U \ge V$ is an overgroup with $[U : V] = d < \infty$. 
   
  Let $G$ be the group $H^{U / V} \rtimes U$. Explicitly, an element of $G$ is a pair $(f, u)$ where $f$ is a function $U/V \to H$ and $u \in U$, and the multiplication is given by $(f, u) (f', u) = (x \mapsto f(x) f'(u^{-1} x), u u')$.
  
  We define a map $\tilde \rho: U \to G, u \mapsto (f_u, u)$, where $f_u: U/V \to H$ is defined as follows. Choose a set of coset representatives $U = \bigsqcup_{i = 1}^d u_i V$. We define $f_u(u_i) = \rho(u_i^{-1} u u_k)$, where $k \in \{1, \dots, d\}$ is the unique index such that $u_i^{-1} u u_k \in V$. Then a routine but tedious check shows that $\tilde \rho$ is a group homomorphism.
 \end{proof}
 
 We now consider automorphic representations of $G$ and $H$. Let $\Pi$ be an automorphic representation of $H(\AA_F)$, and let $\tilde \Pi$ denote the same space regarded as a representation of $G(\AA_E)$.
 
 \begin{proposition}
  We have the following compatibilities:
  \begin{enumerate}[(i)]
   
   \item $\Pi$ is $L$-algebraic as a representation of $G(\AA_E)$ if and only if $\tilde \Pi$ is $L$-algebraic as a representation of $H(\AA_F)$ \cite[\S 3.1]{buzzardgee}.
   
   \item If $w$ is a finite place of $E$ such that $F_v / E_w$ is unramified for every $v \mid w$, then $\tilde \Pi_w = \bigotimes_{v \mid w} \Pi_v$ is unramified as a representation of $G(E_w)$ if and only if each $\Pi_v$ is unramified as a representation of $H(F_v)$; and in this setting, the Langlands--Satake parameter $\tilde s_w$ of $\tilde \Pi_w$ is defined over a subfield $E$ if and only if the same is true of each of the $s_v$. 
   
   \item Let $r: \Gamma_F \to {}^L H(\overline\QQ_\ell)$ be an admissible homomorphism, and let $\tilde r: \Gamma_E \to {}^L G(\overline\QQ_\ell)$ be the induction of $r$ described in Proposition \ref{prop:induceLhom}. Then the restriction of $\tilde r$ to $W_{E_w}$ is $\hat G$-conjugate to $\iota( \tilde s_w )$ if and only if the restriction of $r$ to $W_{F_v}$ is $\hat H$-conjugate to $\iota(s_v)$ for all $v \mid w$.
  \end{enumerate}
 \end{proposition}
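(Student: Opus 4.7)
The plan is to dispatch (i) and (ii) by unpacking definitions, and concentrate effort on (iii), which is the substantive point.

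For (i), I would use the decomposition $G(E \otimes_\QQ \RR) \cong \prod_{\sigma \mid \infty} H(F \otimes_E E_\sigma)$, under which the archimedean factor of $\tilde\Pi$ at an infinite place $\sigma$ of $E$ becomes $\bigotimes_{v \mid \sigma} \Pi_v$. By Proposition \ref{prop:induceLhom}, a maximal torus $\hat T_G \subset \hat G$ is a product of $[F:E]$ copies of a maximal torus $\hat T_H$ of $\hat H$, indexed by embeddings $F \hookrightarrow \QQbar$ over $E$; so the Harish--Chandra parameter $\tilde\lambda_\sigma$ of $\tilde\Pi$ has as its components, up to Weyl orbit, the parameters $\lambda_v$ for $v \mid \sigma$. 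Integrality in $X_\bullet(\hat T_G) = X_\bullet(\hat T_H)^{[F:E]}$ is therefore equivalent to integrality of each $\lambda_v$, proving (i).

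For (ii), since $G = \Res_{F/E} H$, we have $G(E_w) \cong \prod_{v \mid w} H(F_v)$, and a hyperspecial maximal compact of $G(E_w)$ decomposes accordingly, so $\tilde\Pi_w \cong \bigotimes_{v \mid w} \Pi_v$ is unramified if and only if each $\Pi_v$ is. Grouping the factors of $\hat G$ by $W_{E_w}$-orbit on $\Gamma_E / \Gamma_F$ yields blocks indexed by the places $v \mid w$, and under this identification the Langlands--Satake parameter $\tilde s_w$ decomposes as the family $(s_v)_{v \mid w}$; the field-of-definition assertion is immediate.

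For (iii), the core step is a Mackey-style analysis. I would fix an embedding $\overline{E} \hookrightarrow \overline{E}_w$ and choose coset representatives $(\gamma_v)_{v \mid w}$ for $\Gamma_E / \Gamma_F$ aligned with the $W_{E_w}$-orbit decomposition, so that the stabiliser of $\gamma_v \Gamma_F$ in $W_{E_w}$ is identified with $W_{F_v}$. Unwinding the explicit formula $\tilde\rho(u) = (f_u, u)$ from the proof of Proposition \ref{prop:induceLhom} shows that, in the $W_{E_w}$-equivariant block decomposition of $\hat G$, the restriction $\tilde r|_{W_{E_w}}$ is the ``local induction'' of the collection $(r|_{W_{F_v}})_{v \mid w}$; combined with the analogous decomposition of $\tilde s_w$ from (ii), the claim reduces to the assertion that this local induction is injective on $\hat G$-conjugacy classes. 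This is the main obstacle: I would handle it by the standard observation that the centraliser in $\hat G$ of the image of such an induced homomorphism is contained in the ``diagonal'' copy of $\hat H$ stabilising a given orbit, so that any $\hat G$-conjugation between two induced parameters descends (after composition with a permutation of the orbit) to an $\hat H$-conjugation between the underlying $W_{F_v}$-parameters.
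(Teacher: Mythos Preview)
Your overall strategy matches the paper's: for (i) and (ii) the paper simply cites \cite[\S 3.1--3.2]{buzzardgee}, whereas you unpack the definitions directly, which is fine. For (iii), both you and the paper use the Mackey-type observation that $\tilde r|_{W_{E_w}}$ decomposes into blocks indexed by $v \mid w$, each block being the local induction $\Ind_{F_v/E_w}(r|_{W_{F_v}})$, and that $\tilde s_w$ has the same shape with $s_v$ in place of $r|_{W_{F_v}}$. The paper makes this explicit by first identifying $\tilde s_w$ as the fibre product over $\Gamma_{E_w}$ of the locally induced parameters $\tilde s_v = \Ind_{F_v/E_w}(s_v)$, after which the biconditional is declared ``clear''.

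Where your write-up goes astray is the final step. The centraliser claim is not the right tool, and the phrase ``after composition with a permutation of the orbit'' is spurious: a $\hat G$-conjugation involves no permutation of the factors, since the permutation action lives in the $\Gamma_E$-component of ${}^L G$, not in $\hat G$. More importantly, the centraliser of the image tells you about \emph{self}-intertwiners, not about conjugations between two a priori different induced parameters, so as stated the argument is circular. The clean argument is much more direct: if $g = (g_\alpha)_{\alpha \in \Gamma_E/\Gamma_F} \in \hat G$ conjugates $\tilde r|_{W_{E_w}}$ to $\iota(\tilde s_w)$, then restrict both sides to $W_{F_v}$ and project to the $\gamma_v$-factor of $\hat G$ (the one fixed by $W_{F_v}$). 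By the explicit formula from Proposition~\ref{prop:induceLhom}, this projection recovers $r|_{W_{F_v}}$ on one side and $\iota(s_v)$ on the other, and the component $g_{\gamma_v} \in \hat H$ furnishes the required $\hat H$-conjugation. The converse direction is immediate since induction is functorial in conjugacy classes. Replace your centraliser paragraph with this projection argument and the proof is complete.
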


 \begin{proof}
  Statements (i) and (ii) are proved in \cite{buzzardgee}, in Section 3.1 and Section 3.2 respectively. So it remains to prove (iii), for which we need to make precise the relation between the Langlands--Satake parameters of $\tilde\Pi_w$ and $\Pi_v$.
  
  Let $H_v$ denote the base extension of $H$ to $F_v$, and similarly for $G_w$. Then we have $G_w = \prod_{v \mid w} \Res_{F_v / E_w} H_v$ as algebraic groups over $E_w$. For each $v$, we have a Langlands--Satake parameter $s_v: W_{F_v} \to {}^L H_v(\CC) = \hat H(\CC) \rtimes \Gamma_{F_v}$ attached to $\Pi_v$. Applying exactly the same induction process as before, we obtain an admissible homomorphism 
  \[ \tilde s_v = \Ind_{F_v / E_w}(s_v) : W_{E_w} \to \hat H(\CC)^{\Gamma_{F_v} / \Gamma_{E_w}} \rtimes \Gamma_{E_w}. \]
  From the definition of the Langlands--Satake parameter, one sees that $\tilde s_v$ is exactly the Langlands--Satake parameter of $\Pi_v$ considered as a representation of the $E_w$-points of the algebraic group $\Res_{F_v / E_w} H_v$ over $E_w$.
  
  There is a bijection between the orbits for the action of the Frobenius $\sigma_w$ on the factors of $\hat G(\CC)$, and the primes $v \mid w$; so taking the fibre product (over $\Gamma_{E_w}$) of the representations $\tilde s_v$ defines an admissible homomorphism $\tilde s_w: W_{E_w} \to {}^L G(\CC)$. Since the Langlands--Satake parameter of a representation $\Pi \otimes \Pi'$ of a product group $U \times U'$ is the fibre product of the parameters of the factors, we see that $\tilde s_w$ is exactly the Langlands--Satake parameter of $\tilde \Pi_w$. On the other hand, since $\tilde s_w$ is obtained from $(s_v)_{v \mid w}$ by induction, it is clear that $\iota(\tilde s_w)$ is the restriction to $W_{E_w}$ of a global homomorphism $\tilde r = \Ind_{F / E}(r)$ if and only if $\iota(s_v)$ is the restriction of $r$ to $W_{F_v}$ for all $v \mid w$.
 \end{proof}

 \begin{corollary}
  \label{cor:resscalars}
  The Buzzard--Gee conjecture is true for an automorphic representation $\Pi$ of $H(\AA_F)$ if, and only if, it is true for the same representation regarded as a representation of $G(\AA_E)$.\qed
 \end{corollary}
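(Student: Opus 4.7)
The plan is to chain together the three compatibilities proved in the preceding proposition, since the Buzzard--Gee conjecture has exactly three ingredients: $L$-algebraicity of $\Pi$, the existence of a number field $E'$ over which all Satake parameters are defined, and the existence of an admissible Galois representation whose local restrictions recover $\iota$ of the Satake parameters at almost all places.

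For the forward direction, assume the conjecture holds for $\Pi$ as a representation of $H(\AA_F)$. Part (i) of the proposition transports $L$-algebraicity from the $H$-side to the $G$-side. Part (ii) transports the field of definition: if all $s_v$ are defined over $E'$ for $v$ outside a finite set $\Sigma_H$, then all $\tilde s_w$ are defined over $E'$ for $w$ outside the set $\Sigma_G$ consisting of primes of $E$ ramified in $F$ together with those lying below $\Sigma_H$. Given an admissible $r : \Gamma_F \to {}^LH(\Qlbar)$ whose local restriction at each $v \notin \Sigma_H$ is $\hat H$-conjugate to $\iota(s_v)$, we form $\tilde r = \Ind_{F/E}(r)$ via Proposition \ref{prop:induceLhom}; part (iii) then guarantees that $\tilde r\big|_{W_{E_w}}$ is $\hat G$-conjugate to $\iota(\tilde s_w)$ for all $w \notin \Sigma_G$, which is exactly the Buzzard--Gee conjecture for $\tilde \Pi$.

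For the reverse direction, assume the conjecture holds for $\tilde \Pi$ as a representation of $G(\AA_E)$. Parts (i) and (ii) run in both directions and so transfer $L$-algebraicity and a field of definition back to the $H$-side. The remaining step is to extract an admissible $r : \Gamma_F \to {}^LH(\Qlbar)$ from an admissible $\tilde r : \Gamma_E \to {}^LG(\Qlbar)$. Using the description of $\hat G$ in the first bullet of Proposition \ref{prop:induceLhom} as a product of $[F:E]$ copies of $\hat H$ indexed by $\Gamma_E/\Gamma_F$ with $\Gamma_F$ preserving the distinguished factor, we restrict $\tilde r$ to $\Gamma_F$ and project to that factor; this yields an admissible $L$-homomorphism $r$. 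Applying part (iii) in the other direction recovers local-global compatibility at each $v$ above an unramified $w$, which covers all but finitely many primes of $F$.

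The only point requiring care — and it has in effect already been handled in Proposition \ref{prop:induceLhom} — is that the induction $r \mapsto \tilde r$ and the restriction-and-projection map $\tilde r \mapsto r$ are mutually inverse up to $\hat G$-conjugacy, so that nothing is lost when one passes from $H$ to $G$ and back. This is a purely formal check with cosets and the twisted multiplication on the semidirect product, and once granted, the corollary is an immediate three-line application of (i), (ii), (iii). I expect no genuine obstacle: the entire content of the corollary has been front-loaded into the preceding proposition, and the proof reduces to unwinding the definitions.
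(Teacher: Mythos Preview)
Your proposal is correct and matches the paper's approach: the paper gives no proof beyond the \qed, treating the corollary as an immediate consequence of parts (i)--(iii) of the preceding proposition, and you have faithfully unpacked what that entails. You have also correctly isolated the one point the paper leaves implicit --- that an arbitrary admissible $\tilde r:\Gamma_E\to{}^LG(\Qlbar)$ is $\hat G$-conjugate to $\Ind_{F/E}$ of its restriction-and-projection to the distinguished $\hat H$-factor --- and you are right that this is a routine coset computation in the semidirect product (a Shapiro-type statement for $L$-homomorphisms).
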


\section{Hilbert modular forms}
\label{sect:HMF}

\subsection{Weights}

Let $F$ be a totally real field, and let $\Sigma_F$ be the set of infinite places of $F$. By a \emph{weight} for $F$, we mean a collection $\uk = (k_\sigma)_{\sigma \in \Sigma_F}$ of integers indexed by $\Sigma_F$.

\begin{notation}
 For $x \in F^\times$ and $\uk$ a weight, we write $x^{\uk}$ for $\prod_\sigma \sigma(x)^{k_\sigma} \in \RR^\times$.
\end{notation}

Thus weights are just the same thing as characters of the torus $\Res_{F / \QQ} \mathbf{G}_m$.

\begin{definition}
 We say $k$ is \emph{paritious} if the parity of $k_\sigma$ is independent of $\sigma$.
\end{definition}

We also consider a slightly more general notion. For $E \subseteq F$ a subfield and $\uk$ a weight of $F$, we define $\uk_E$ to be the weight for $E$ defined by $(k_E)_\tau = \sum_{\sigma \mid \tau} k_\sigma$ (equivalently, the restriction of $\uk$ to $\Res_{E / \QQ} \mathbf{G}_m \subset \Res_{F / \QQ} \mathbf{G}_m$).

\begin{definition}
 We shall say $\uk$ is \emph{$E$-paritious} if $\uk_E$ is paritious as a weight for $E$.
\label{definition:E-paritious}
\end{definition}

Thus being $E$-paritious is no condition at all if $E = \QQ$, and becomes more restrictive as $E$ gets larger, with the opposite extreme $E = F$ being the previous definition.

 \subsection{Adelic Hilbert modular forms}
  \label{sect:adelicHMF}
  Let $\frakH_F$ be the set of elements of $F \otimes \CC$ of totally positive imaginary part, with its natural left action of $\GL_2^+(F \otimes \RR)$. Let $\uk = (k_\sigma)_{\sigma \in \Sigma_F}$ be a collection of integers, and $\ut = (t_\sigma)_{\sigma \in \Sigma_F}$ a collection of real numbers. We can define the weight $(\uk, \ut)$ right action of $\GL_2^+(F \otimes \RR)$ on functions $\frakH_F \to \CC$ by
  \[ 
   (f \mid_{k, t} \gamma )(\tau) =  \det(\gamma)^{\uk + \ut - 1} (c\tau + d)^{-\uk} f(\gamma \cdot \tau).
  \]
  
  \begin{notation}
   We say the pair $(\uk, \ut)$ is \emph{reasonable} if the quantity $k_\sigma + 2 t_\sigma$ is independent of $\sigma$, which is equivalent to requiring that $\stbt x00x$ acts trivially for all $x \in \cO_F^{\times +}$ (or just for all $x$ in a finite-index subgroup). We denote the common value of $k_\sigma + 2t_\sigma$ by $R$.
  \end{notation}
  
  We define a \emph{Hilbert modular form of weight $(\uk, \ut)$} to be a function 
  \[ f: \GL_2(\AFf) \times \frakH_F \to \CC\]
  such that 
  \begin{itemize}
   \item $f(g, -)$ is holomorphic on $\frakH_F$ for all $g \in \GL_2(\AFf)$,
   \item $f(\gamma g, -) = f(g, -) \mid_{\uk, \ut} \gamma^{-1}$ for all $\gamma \in \GL_2^+(F)$,
   \item there exists an open compact subgroup $U$ of $\GL_2(\AFf)$ such that $f(gu, \tau) = f(g, \tau)$ for all $u \in U$ and $(g, \tau) \in \GL_2(\AFf) \times \frakH_F$.
  \end{itemize}
  (If $F = \QQ$ we need an additional condition of holomorphy at the cusps, which is otherwise automatic by the K\"ocher principle.) We write $M_{\uk, \ut}$ for the space of such functions, and $S_{\uk, \ut}$ for the subspace of cusp forms. Both spaces are clearly zero unless $(\uk, \ut)$ is reasonable. From now on $(\uk,\ut)$ is implicitly assumed reasonable.
  
  \begin{remark}
   We have chosen to formulate the definition in terms of $\GL_2(\AFf) \times \frakH_F$ since it makes the link to the classical theory slightly more direct. The alternative, more analytic, approach is to work with functions on the quotient $\GL_2(F) \backslash \GL_2(\AF)$. Concretely, if $f$ is a Hilbert modular form in the above sense, then the function $\tilde f$ on $\GL_2(\AF)$ given by $\tilde f(g_\mathrm{fin}, g_\infty) = \left( f(g_\mathrm{fin}, -) \mid_{\uk, \ut} g_\infty\right)(1 \otimes i)$ is left $\GL_2(F)$-invariant, and for each $\sigma \in \Sigma_F$, it transforms by $e^{i k_\sigma \theta}$ under right translation by $\tbt{\cos \theta}{\sin \theta}{-\sin \theta}{\cos \theta} \in \mathrm{SO}_2(F_\sigma)$. Conversely we can recover $f$ from $\tilde f$ via $f(g, x + iy) = y^{-(\uk + \ut - 1)} \tilde f\left(g, \stbt y x 0 1\right)$.
  \end{remark}
  
  The following properties of $M_{\uk, \ut}$ and $S_{\uk, \ut}$ are well-known:
  
  \begin{itemize}
   
   \item The spaces $M_{\uk, \ut}$ and $S_{\uk, \ut}$ are admissible smooth representations of the group $\GL_2(\AFf)$, via the right-translation action. 
   
   \item If $\ut' = \ut + h \cdot \underline{1}$ for some $h \in \RR$, where $\underline{1}$ is the weight all of whose components are 1, then the map $f \mapsto f'$, $f'(g, \tau) = \|\det g\|^h f(g, \tau)$, defines a bijection between $M_{\uk, \ut}$ and $M_{\uk, \ut'}$, and an isomorphism of $\GL_2(\AFf)$-representations.
   \[ 
    M_{\uk, \ut'} = M_{\uk, \ut} \otimes \|\det\|^h.
   \]
   (Here $\|x \|$ is the ad\`ele norm map, sending a uniformiser at a prime $\fq$ of $F$ to the reciprocal of the size of its residue field.)
  
   \item For any $f \in M_{\uk, \ut}$ there is a finite-index subgroup of $\AFf^\times$, containing $F^{\times +}$, such that for $x$ in this subgroup, $\stbt x 0 0 x \in Z(\GL_2(\AFf))$ acts on $f$ by $\|x\|^{R - 2}$ where $R$ is the common value of $k_\sigma + 2t_\sigma$.

   \item If the $t_\sigma$ are all in $\ZZ$, then $M_{\uk, \ut}$ and $S_{\uk, \ut}$ are the base-extensions to $\CC$ of $\GL_2(\AFf)$-representations defined over $\tilde F$, the Galois closure\footnote{Actually a somewhat smaller space suffices: one can take here the fixed field of the largest subgroup of $\operatorname{Gal}(\tilde F / \QQ)$ whose permutation action on $\Sigma_F$ stabilises the weight $\uk$.} of $F$ in $\CC$ (see e.g.\ \cite{shimura78}).
  \end{itemize}
 \subsection{Hecke theory and Satake parameters}
   
  Let $\Pi$ be an irreducible $\GL_2(\AFf)$-subrepresentation of $S_{\uk, \ut}$. Then we can write $\Pi = \sideset{}{'}{\bigotimes_v} \Pi_v$, where the product runs over finite primes of $F$, and each $\Pi_v$ is an irreducible smooth representation of $\GL_2(F_v)$. All but finitely many of the $\Pi_v$ will be unramified, so we have a collection of Satake parameters $s_v$.
  
  These $s_v$ can be described in terms of the action of Hecke operators. Let $\cT(v)$ denote the double coset of $\stbt 1 0 0 {\varpi_v}$, where $\varpi_v \in \AFf$ is a uniformiser at $v$; and let $\cS(v)$ denote the double coset of $\stbt {\varpi_v} 0 0 {\varpi_v}$. If $\tau_v$ and $\sigma_v$ denote the eigenvalues of these operators acting on the $\GL_2(\cO_{F, v})$-invariants of $\Pi$, then one has the following formula:
  
  \begin{proposition}
   The Satake parameter $s_v$ is the semisimple conjugacy class such that
   \[ \tr s_v = \Nm(v)^{-1/2} \tau_v \quad\text{and}\quad \det s_v = \sigma_v. \]
  \end{proposition}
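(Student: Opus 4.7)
The plan is to identify $\Pi_v$ explicitly as an unramified principal series, read off its Satake parameter, and then match the two Hecke eigenvalues by a direct coset-decomposition computation.

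Since $\Pi_v$ is irreducible and admits a nonzero vector fixed by $K_v = \GL_2(\cO_{F, v})$, it is the unique spherical constituent of a normalised principal series $\Ind_B^G(\chi_1 \otimes \chi_2)$ for some unramified characters $\chi_i$ of $F_v^\times$; by definition, the Langlands--Satake parameter $s_v$ is the semisimple conjugacy class of $\mathrm{diag}\bigl(\chi_1(\varpi_v),\chi_2(\varpi_v)\bigr)$ in $\hat G(\CC) = \GL_2(\CC)$. So the claim reduces to showing
\[
 \sigma_v = \chi_1(\varpi_v)\chi_2(\varpi_v), \qquad \tau_v = \Nm(v)^{1/2}\bigl(\chi_1(\varpi_v)+\chi_2(\varpi_v)\bigr).
\]
I would then fix the spherical vector $\phi$ with $\phi(k)=1$ for $k \in K_v$, which satisfies $\phi(bk) = (\chi_1 \otimes \chi_2)(b)\,\delta_B^{1/2}(b)$ for $b \in B$, $k \in K_v$, where $\delta_B\stbt a\ast 0 d = |a/d|_v$.

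The first formula is immediate: $\stbt{\varpi_v}0 0{\varpi_v}$ is central, so $\cS(v)$ acts on $\phi$ by the central character $\chi_1\chi_2$ evaluated at $\varpi_v$. For the formula for $\tau_v$, I would use the standard coset decomposition
\[
 K_v \tbt 1 0 0 {\varpi_v} K_v \;=\; \bigsqcup_{c \in \cO_v / \varpi_v \cO_v} \tbt 1 0 c {\varpi_v} K_v \;\sqcup\; \tbt 0 {\varpi_v} 1 0 K_v,
\]
coming from the bijection $K_v / I \cong \mathbf{P}^1(\cO_v/\varpi_v)$ where $I$ is the Iwahori subgroup, and then evaluate $\phi$ on each representative by an explicit Iwasawa decomposition. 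The representative $\stbt 1 0 0 {\varpi_v}$ already lies in $B$ and contributes $\Nm(v)^{1/2}\chi_2(\varpi_v)$; for $c \in (\cO_v/\varpi_v)^\times$ the identity
\[
 \tbt 1 0 c {\varpi_v} = \tbt {-\varpi_v/c} 1 0 c \cdot \tbt 0 1 1 {\varpi_v/c},
\]
whose right factor lies in $K_v$, shows that each such representative contributes $\Nm(v)^{-1/2}\chi_1(\varpi_v)$; and the final representative $\stbt 0 {\varpi_v} 1 0 = \stbt {\varpi_v} 0 0 1\stbt 0 1 1 0$ contributes the same amount. Summing the $q+1$ terms (with $q = \Nm(v)$) yields
\[
 \tau_v = \Nm(v)^{1/2}\chi_2(\varpi_v) + q \cdot \Nm(v)^{-1/2}\chi_1(\varpi_v) = \Nm(v)^{1/2}\bigl(\chi_1(\varpi_v) + \chi_2(\varpi_v)\bigr),
\]
as required.

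The only step that is not bookkeeping is producing the Iwasawa decomposition of $\stbt 1 0 c {\varpi_v}$ when $c$ is a unit -- the displayed identity above does this in one line -- and noting that the factor $\Nm(v)^{1/2}$ versus $\Nm(v)^{-1/2}$ is tracked by $\delta_B^{1/2}$ as the two diagonal entries are swapped. Everything else (the definition of $s_v$, the transformation law of $\phi$, and the coset decomposition of $K_v\stbt 1 0 0 {\varpi_v} K_v$) is standard.
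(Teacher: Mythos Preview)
Your computation is correct: the coset decomposition, the Iwasawa factorisation of $\stbt 1 0 c {\varpi_v}$ for $c$ a unit, and the bookkeeping with $\delta_B^{1/2}$ all check out, and the final sum gives $\tau_v = \Nm(v)^{1/2}\bigl(\chi_1(\varpi_v)+\chi_2(\varpi_v)\bigr)$ as claimed.

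Note, however, that the paper does not actually supply a proof of this proposition --- it is stated as a standard fact about the unramified local Langlands correspondence for $\GL_2$ (essentially the explicit Satake isomorphism), and the text moves on immediately to describing the $s_v$ more concretely for narrowly principal primes. So there is no ``paper's own proof'' to compare against; what you have written is exactly the routine verification that the paper takes for granted.
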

    
  We give a more explicit description of the $s_v$ if the prime $v$ is narrowly principal, generated by a totally positive element $\varpi$; compare \cite[\S 3.3]{buzzardgee} for $F = \QQ$. Let $f$ be the new vector of $\Pi$. Then the restriction of $f$ to $\frakH_F$ has a Fourier expansion
  \[ f(\tau) = \sum_{\substack{\alpha \in \mathfrak{d}_F^{-1} \\ \alpha \gg 0}} c(\alpha) \exp\left( 2 \pi i \tr(\alpha \tau)\right).
  \]
  There is a constant $t(\varpi)$, the ``naive Hecke eigenvalue'', such that $c(\varpi \alpha) = t(\varpi) c(\alpha)$ if $(\varpi, \alpha \mathfrak{d}_F) = 1$. This is related to the ``normalised Hecke eigenvalue'' $\tau_v$ above by
  \[ \tau_v = \varpi^{\underline{2}-\uk-\ut} t(\varpi).\]
  Meanwhile, the quantity $\sigma_v$ is simply $\Nm(v)^{2-R} \chi(\varpi)$, where $\chi$ is the finite-order character by which the diamond operators act on $F$.

  It is shown in \S 3.2 of \cite{buzzardgee} that $\Pi$ is $L$-algebraic if and only if $t_\sigma \in \tfrac12 + \ZZ$, for all $\sigma \in \Sigma_F$.  Notice that, for a given $\uk$, we can find $\ut$ such that $(\uk, \ut)$ is reasonable and $t_\sigma \in \tfrac{1}{2} + \ZZ\ \forall \sigma$ if and only if $\uk$ is paritious. Thus the automorphic representations of $G$ arising from non-paritious Hilbert modular forms \emph{cannot} be twisted to become $L$-algebraic.
  
  It follows from Shimura's algebraicity theorem quoted above that if all $t_\sigma$ are in $\tfrac12 + \ZZ$ then the Satake parameters $s_v$ are all defined over a finite extension of $\QQ$ (for all good primes $v$, not only those trivial in the narrow class group).

  \begin{remark}
   \label{rem:LaritvsLalg}
   Buzzard and Gee define $\Pi$ to be \emph{$L$-arithmetic} if all the $s_v$ lie in a common finite extension. So Shimura's algebraicity theorem shows that if $\Pi$ is $L$-algebraic, then it is $L$-arithmetic. If $F = \QQ$, the converse holds: $L$-arithmetic implies $L$-algebraic, as shown in \cite{buzzardgee}. The same holds over general fields $F$, as we will see in the next section.
  \end{remark}

 \subsection{The group $G^*$}
 
  Now let $E$ be a subfield of $F$, as before, and set $G = \Res_{F/E} \GL_2$. We are interested in subgroups of $G$ defined by a condition on the determinant, as follows. The group $\GL_1$ is a subgroup of $\Res_{F / E} \GL_1$ in the obvious way. We define a group $G^*$ over $E$ by
  \[ G^* = G \times_{(\Res_{F / E} \GL_1)} \GL_1.\]
  Thus $G^*(E) = \{ g \in \GL_2(F): \det(g) \in E^*\}$.
  
  \begin{proposition}[{cf.\ \cite[p399]{brylinskilabesse84}}]
   The $L$-group of $G^*$ is the quotient of ${}^L G$ by a subgroup of $Z(\hat G)$. More specifically, if $K$ is the kernel of the ``norm'' map $Z(\hat G) = \prod_{\Gamma_E / \Gamma_F}\GL_1 \to \GL_1$, then $K$ is normal in ${}^L G$, and we have
   \[ \widehat {G^*} = \hat G / K, \qquad {}^L G^* = {}^L G / K. \]
  \end{proposition}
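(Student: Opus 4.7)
The plan is to dualize the short exact sequence of connected reductive groups over $E$
\[ 1 \to G^* \to G \xrightarrow{\det'} Q \to 1, \qquad Q := (\Res_{F/E}\GL_1)/\GL_1, \]
where $\det'$ is the determinant composed with the quotient by the diagonally embedded $\GL_1$. By the very definition of $G^*$ as the pullback, this sequence is exact, and its quotient is a torus.

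The first step is to identify the dual torus $\hat Q$ inside $Z(\hat G)$. From the exact sequence $1 \to \GL_1 \to \Res_{F/E}\GL_1 \to Q \to 1$ and the left-exactness of the character functor, $X^*(Q)$ is the kernel of summation $\ZZ[\Gamma_E/\Gamma_F] \to \ZZ$. Passing to the dual torus swaps characters and cocharacters, so $X_*(\hat Q)$ equals this same kernel of summation. On the other hand, the induced map on cocharacter lattices of the multiplication map $Z(\hat G) = \prod_{\Gamma_E/\Gamma_F} \GL_1 \to \GL_1$ is again summation $\ZZ[\Gamma_E/\Gamma_F] \to \ZZ$. Hence $\hat Q$ is canonically identified with the subgroup $K \subset Z(\hat G)$ of the statement.

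The second step is the duality of the three-term sequence. Since $Q$ is a torus and $G^*$ contains the derived subgroup $[G,G] = \Res_{F/E}\SL_2$, a standard root-datum calculation (as in the cited passage of Brylinski--Labesse) yields a short exact sequence of dual groups
\[ 1 \to \hat Q \to \hat G \to \widehat{G^*} \to 1, \]
with $\hat Q$ embedded centrally in $\hat G$. Combined with the first step this gives $\widehat{G^*} = \hat G / K$.

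Finally, to pass to $L$-groups one checks that $K$ is $\Gamma_E$-stable. The action of $\Gamma_E$ on $Z(\hat G)$ permutes the factors through $\Gamma_E/\Gamma_F$, and the multiplication map to $\GL_1$ (on which $\Gamma_E$ acts trivially) is invariant under this permutation; so $K$ is preserved. Being central in $\hat G$ and Galois-stable, it is normal in ${}^L G = \hat G \rtimes \Gamma_E$, whence ${}^L G^* = {}^L G / K$. The main technical point is the bookkeeping in the duality step: tracking which lattice is a kernel versus a cokernel, and verifying that the sequence dualizes to genuine reductive groups rather than just an isogeny. Once $\hat Q$ is correctly identified with $K$, the Galois-equivariance and the conclusion for $L$-groups are essentially formal.
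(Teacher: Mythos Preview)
Your argument is correct. Note, however, that the paper does not actually supply a proof of this proposition: it is stated with a ``cf.'' reference to Brylinski--Labesse and then used without further justification. So there is no ``paper's own proof'' to compare against; your write-up fills in the details that the paper omits.

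A few minor points worth tightening. In your first step you show that $X_*(\hat Q)$ and $X_*(K)$ are both equal to the augmentation kernel in $\ZZ[\Gamma_E/\Gamma_F]$, but this by itself only identifies them as abstract $\Gamma_E$-lattices. To conclude that $\hat Q$ \emph{is} $K$ as a subtorus of $Z(\hat G)$, you need the map $\hat Q \hookrightarrow \hat G$ dual to $G \twoheadrightarrow Q$; you invoke this only in the second step, so the logic is slightly out of order. It would be cleaner to first observe that dualizing $G \to Q$ gives a central embedding $\hat Q \hookrightarrow \hat G$ (this is where you use that $[G,G] \subset G^* = \ker(G \to Q)$, so coroots of $G$ die in $X_*(Q)$), and then identify its image with $K$ via the cocharacter computation.

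In the final step, to conclude ${}^L G^* = {}^L G / K$ you need not only that $K$ is normal in ${}^L G$, but also that the $\Gamma_E$-action on $\hat G / K$ inherited from the quotient agrees with the canonical $\Gamma_E$-action on $\widehat{G^*}$ used to form ${}^L G^*$. This holds because the inclusion $G^* \hookrightarrow G$ is defined over $E$, so the dual surjection $\hat G \to \widehat{G^*}$ is $\Gamma_E$-equivariant; you might say this explicitly.
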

  
  \begin{remark}
   The group $\hat G = (\GL_2)^{\Gamma_E / \Gamma_F}$ has a $2^d$-dimensional representation, where $d = [F : E]$, given by the tensor product of the standard 2-dimensional representations of the $\GL_2$ factors. This representation factors through $\hat G^*$, and since it is invariant under permutation of the factors, it extends to a representation of ${}^L G^*$. We call this the \emph{Asai representation}, as the corresponding $L$-series first appeared in the work of Asai \cite{asai77}; see also Yoshida \cite{yoshida94}. However, it is important to note that many other interesting algebraic representations of ${}^L G$ factor through ${}^L G^*$, such as the induction from ${}^L H$ of the 3-dimensional adjoint representation of ${}^L H$, where $H = \GL_2 / F$.
  \end{remark}
  
  The reason for introducing $G^*$ is that it, so to speak, ``makes more representations algebraic''. There is a natural quotient map $X_\bullet(\hat T)$ to $X_\bullet(\hat T^*)$, where $\hat T$ is the standard maximal torus of $\hat G$. If $\lambda \in X_\bullet(\hat T)_\CC$, and $\lambda^*$ is its image in $X_\bullet(\hat T^*)_\CC$, then \textbf{it can occur that $\lambda^*$ is integral even if $\lambda$ is not}. In fact, we have the following result:

 \begin{proposition}
  Let $\Pi$ be the automorphic representation of $G(\AA_E) = \GL_2(\AA_F)$ given by a Hilbert modular form over $F$ of weight $(\underline{k}, \underline{t})$; and for $\tau$ a real place of $E$, let $\lambda_\tau$ be the Harish--Chandra parameter of $\Pi_\tau$. 
  
  Then the projection $\lambda_\tau^*$ lies in the integral cocharater lattice $X_\bullet(\hat T^*)$ if, and only if, we have $\sum_{\sigma \mid \tau} \left(t_\sigma - \tfrac{1}{2}\right) \in \ZZ$.
 \end{proposition}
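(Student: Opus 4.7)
The plan is to compute $X_\bullet(\hat T^*)$ explicitly as a quotient of $X_\bullet(\hat T)$, write $\lambda_\tau$ in these coordinates using the standard dictionary for discrete series of $\GL_2(\RR)$, and analyse when the projected element is integral. First, since $\hat G = \prod_{\sigma \mid \tau} \GL_2$ with factors indexed by the real places $\sigma$ of $F$ above $\tau$, we have $\hat T = \prod_{\sigma \mid \tau} T_2$ (with $T_2$ the diagonal torus of $\GL_2$) and $X_\bullet(\hat T) = \bigoplus_{\sigma \mid \tau} \ZZ^2$, whose elements I write as tuples $(a_\sigma, b_\sigma)_{\sigma \mid \tau}$. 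By the previous proposition, $\hat T^* = \hat T / K$, and $X_\bullet(K) \subset X_\bullet(\hat T)$ is the sublattice of diagonal tuples $(c_\sigma, c_\sigma)_{\sigma \mid \tau}$ with $c_\sigma \in \ZZ$ and $\sum_\sigma c_\sigma = 0$; hence $X_\bullet(\hat T^*) = X_\bullet(\hat T)/X_\bullet(K)$.

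Next, I use the standard description of the Harish--Chandra parameter of the local factor $\Pi_\sigma$ attached to weight $(k_\sigma, t_\sigma)$: writing $\lambda_\sigma = (a_\sigma, b_\sigma)$, one has $a_\sigma - b_\sigma = k_\sigma - 1 \in \ZZ$, while both $a_\sigma$ and $b_\sigma$ lie in the coset $(t_\sigma - \tfrac{1}{2}) + \ZZ$. In particular, $\lambda_\sigma$ itself is integral if and only if $t_\sigma \in \tfrac{1}{2} + \ZZ$, recovering the individual $L$-algebraicity criterion recalled above.

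Finally, $\lambda_\tau^*$ lies in $X_\bullet(\hat T^*)$ if and only if there exist scalars $(c_\sigma)_{\sigma \mid \tau}$ with $\sum_\sigma c_\sigma = 0$ such that $\lambda_\tau - (c_\sigma, c_\sigma)_\sigma$ lies in $X_\bullet(\hat T)$. The condition $a_\sigma - b_\sigma \in \ZZ$ is automatic, and matching the $\sigma$-component forces $c_\sigma \in a_\sigma + \ZZ = (t_\sigma - \tfrac{1}{2}) + \ZZ$. The global constraint $\sum_\sigma c_\sigma = 0$ can therefore be met with some such choice if and only if $\sum_{\sigma \mid \tau}(t_\sigma - \tfrac{1}{2}) \in \ZZ$, yielding the claim. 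The main technical subtlety lies in step 2, namely pinning down the residue class of $a_\sigma$ modulo $\ZZ$ as $t_\sigma - \tfrac{1}{2}$ in the Buzzard--Gee normalisation; this is essentially an input from the archimedean local Langlands correspondence, after which the remaining verification is a direct lattice calculation.
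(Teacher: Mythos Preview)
Your argument is correct and follows essentially the same approach as the paper: both proofs compute $\lambda_\tau$ explicitly, describe the cocharacter lattice of $\hat T^*$ as a quotient of that of $\hat T$, and read off the integrality condition. The only difference is the choice of coordinates: you work in the standard basis $(a_\sigma,b_\sigma)$ of $X_\bullet(T_2)\cong\ZZ^2$ and realise $X_\bullet(\hat T^*)$ as $X_\bullet(\hat T)/X_\bullet(K)$, whereas the paper uses the Buzzard--Gee basis $(m_\sigma,n_\sigma)=(a_\sigma-b_\sigma,\,a_\sigma+b_\sigma)$ with the parity constraint $m_\sigma\equiv n_\sigma\pmod 2$ and describes $X_\bullet(\hat T^*)$ directly as tuples $\big((m_\sigma)_\sigma,\,n\big)$ with $n\equiv\sum_\sigma m_\sigma\pmod 2$, the quotient map being $(m_\sigma,n_\sigma)\mapsto\big((m_\sigma),\sum_\sigma n_\sigma\big)$. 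The two descriptions are easily seen to be equivalent, and your final lattice check (choosing $c_\sigma\in(t_\sigma-\tfrac12)+\ZZ$ with $\sum_\sigma c_\sigma=0$) is exactly the same condition as the paper's parity/integrality requirement on $\sum_\sigma n_\sigma$.
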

 
 \begin{proof}
  Using the basis of the Cartan subalgebra of $\mathfrak{gl}_2(\CC)$ described in \cite[\S 3.3]{buzzardgee}, we can identify $X_\bullet(\hat T)$ with the abelian group 
  \[ 
   \left\{ (m_\sigma, n_\sigma)_{\sigma \mid \tau} : m_\sigma, n_\sigma \in \ZZ, m_\sigma = n_\sigma \bmod 2 \right\}\relax,
  \] 
  and in terms of this basis we have
  \[ \lambda_\tau = \Big( \pm (k_\sigma-1), k_\sigma + 2 t_\sigma - 2\Big)_{\sigma \mid \tau}.\]
  
  One has a similar description of $X_\bullet(\hat T^*)$; it is given by pairs $( (m_\sigma)_{\sigma \mid \tau}, n)$, with $m_\sigma, n \in \ZZ$ such that $n = \sum m_\sigma \bmod 2$. The quotient map is given by $(m_\sigma, n_\sigma)_{\sigma \mid \tau} \mapsto \left( (m_\sigma)_{\sigma \mid \tau}, \sum n_\sigma\right)$. So one computes that $\lambda_\tau^* \in X_\bullet(\hat T^*)$ if and only if $\sum_{\sigma} (t_\sigma - \tfrac12) \in \ZZ$, as required.
 \end{proof}

 \begin{proposition}
  If $\underline{k}$ is $E$-paritious, then we may choose the $t_\sigma$ such that $(\underline{k}, \underline{t})$ is reasonable and $\lambda_\tau^*$ is $L$-algebraic for all real places $\tau$ of $E$. Conversely, if $\underline{k}$ is not $E$-paritious then no such $\underline{t}$ exists.
 \end{proposition}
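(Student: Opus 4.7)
The plan is to reduce the statement to a single congruence modulo $2$, from which both directions fall out immediately. Let $d = [F : E]$; because $F$ is totally real, every real place $\tau$ of $E$ has exactly $d$ places $\sigma$ of $F$ above it.

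First I would substitute the reasonableness condition into the criterion of the previous proposition. Writing $R$ for the common value $k_\sigma + 2t_\sigma$, so that $t_\sigma = (R - k_\sigma)/2$, summing over $\sigma \mid \tau$ gives
\[
\sum_{\sigma \mid \tau}\!\left(t_\sigma - \tfrac{1}{2}\right) \;=\; \tfrac{1}{2}\bigl(dR - (k_E)_\tau - d\bigr),
\]
and the previous proposition says $\lambda_\tau^*$ is $L$-algebraic precisely when this lies in $\ZZ$, i.e.\ when $dR$ is an integer satisfying
\[ dR \;\equiv\; (k_E)_\tau + d \pmod{2}. \]

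Both directions can then be read off this congruence. If $\uk$ is not $E$-paritious, the right-hand side takes different parities as $\tau$ varies, so no single $R$ can satisfy all of the congruences simultaneously; this gives the converse. If $\uk$ is $E$-paritious, the parity $\epsilon \in \{0,1\}$ of $(k_E)_\tau + d$ is independent of $\tau$, and it suffices to exhibit one $R$ with $dR$ an integer of parity $\epsilon$: one may take $R = 0$ when $\epsilon = 0$, and $R = 1/d$ (so $dR = 1$) when $\epsilon = 1$.

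There is essentially no obstacle here; the real content has already been extracted into the previous proposition's explicit formula for $\lambda_\tau^*$, so the present statement is bookkeeping. The one mild point worth flagging is that one must allow $R \in \tfrac{1}{d}\ZZ$, not just $R \in \ZZ$, to handle the case where $d$ is even while the common parity of $(k_E)_\tau$ is odd; this is harmless because the $t_\sigma$ are not required to be integral in the definition of a Hilbert modular form.
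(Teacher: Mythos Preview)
Your proof is correct and follows essentially the same route as the paper: substitute the reasonableness condition $t_\sigma = (R-k_\sigma)/2$ into the criterion of the previous proposition, obtain $\sum_{\sigma\mid\tau}(t_\sigma-\tfrac12)=\tfrac12\bigl(d(R-1)-(k_E)_\tau\bigr)$, and observe that a suitable $R$ exists precisely when the parity of $(k_E)_\tau$ is independent of $\tau$. Your version is simply more explicit about the choice of $R$ (including the observation that one may need $R\notin\ZZ$ when $d$ is even), whereas the paper leaves this as a one-line remark.
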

 \begin{proof}
   Since $(\uk,\ut)$ is reasonable, the quantity $k_\sigma + 2t_\sigma = R$
   is independent of $\sigma$. Then
   $\sum_{\sigma \mid \tau} (t_\sigma -\frac{1}{2}) =
   \frac{[F:E](R-1)-\sum_{\sigma \mid \tau}k_\sigma}{2}$.  We can
   chose $R$ so that this number is an integer if and only if the
   parity of $\sum_{\sigma \mid \tau} k_\sigma$ is independent of
   $\tau$.
 \end{proof}
 
 \subsection{Restriction of automorphic representations for $G$}
 
  Let $\Pi$ be an irreducible $\GL_2(\AFf)$-subrepresentation of $S_{\uk, \ut}$. Then we may consider the restriction of $\Pi$ to the subgroup $G^*(\AEf)$. This will usually not be irreducible. We denote by $\Psi$ the set of irreducible constituents of $\Pi$ as a $G^*(\AEf)$-representation; this is (the finite part of) a global $L$-packet for $G^*$.
  
  If $\Pi$ is not of CM type (which we shall assume from now on), then all representations $\Pi^* \in \Psi$ are the finite parts of automorphic representations of $G^*$, and they all have the same multiplicity in the spectrum of $G^*$ \cite[\S 3.2]{brylinskilabesse84}. Moreover, any two representations $\Pi_1^*, \Pi_2^* \in \Psi$ have the same Satake parameter at any prime where they are both unramified, and the same Harish--Chandra parameter at $\infty$; these parameters are simply the images of the Satake and Harish--Chandra parameters of $\Pi$ under the quotient map ${}^L G(\CC) \to {}^L G^*(\CC)$.
    
  In particular, the Buzzard--Gee conjecture is true for one $\Pi^* \in \Psi$ if and only if it holds for all of them, with the same representation $r_{\Pi^*, \iota}$. (That is, the Buzzard--Gee conjecture is really an assertion about automorphic $L$-packets, not about individual automorphic representations.)
  
 \section{Galois representations}
 
  \subsection{Setup}
  
  The following theorem, which establishes the Buzzard--Gee conjecture for automorphic representations of $\GL_2$ arising from paritious Hilbert modular forms, is well known:
 
 \begin{theorem}[Blasius--Rogawski]
  \label{thm:BRT}
  Let $\Pi$ be an irreducible subrepresentation of $S_{\uk, \ut}$, where $k_\sigma \ge 2$ and $t_\sigma \in \tfrac12 + \ZZ$ for all $\sigma$. Let $\ell$ be prime and let $\iota$ be an isomorphism $\CC \to \Qlbar$. Then there exists a continuous Galois representation
  \[ r_{\Pi, \iota}: \Gamma_F \to \GL_2(\Qlbar) \]
  such that for all primes $v \nmid \ell$ at which the local factor $\Pi_v$ is unramified, the representation $ r_{\Pi, \iota}$ is also unramified, and the conjugacy class of $r_{\Pi, \iota}(\Frob_v)$ is $\iota(s_v)$.
 \end{theorem}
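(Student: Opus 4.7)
The plan is to reduce to the classical case by first normalizing the central character. Since $\uk$ is paritious and $t_\sigma \in \tfrac12 + \ZZ$, the automorphic representation $\Pi$ is $L$-algebraic, and after twisting by $\|\det\|^h$ for a suitable half-integer $h$ (which does not affect the existence of the Galois representation, only its twist) we may assume $\Pi$ has a cohomological central character with all $t_\sigma = (R-k_\sigma)/2 \in \tfrac12 + \ZZ$ equal to a standard value (e.g.\ $R=2$, so $\ut = \underline{1} - \uk/2$). In this normalization, $\Pi$ contributes to the cohomology of a suitable local system on a Hilbert modular variety (resp.\ quaternionic Shimura variety).

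The main construction proceeds via the Jacquet--Langlands correspondence. Choose a quaternion algebra $B/F$ which is split at exactly one infinite place $\sigma_0$ of $F$ (if $[F:\QQ]$ is odd) or which is ramified at all but one infinite place and at one finite place where $\Pi$ is discrete series (if $[F:\QQ]$ is even and such a finite place exists). Transfer $\Pi$ to an automorphic representation $\Pi^B$ of $B^\times(\AA_F)$ by Jacquet--Langlands. Then $\Pi^B_\infty$ is a holomorphic discrete series at $\sigma_0$ and finite-dimensional at the other infinite places, so $\Pi^B$ contributes to the $\ell$-adic étale cohomology of the Shimura curve $\operatorname{Sh}_{K}(B)$ (for suitable level $K$) with coefficients in the local system determined by the weight $\uk$. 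The two-dimensional piece of this cohomology cut out by the Hecke eigensystem of $\Pi^B$ provides the desired representation $r_{\Pi,\iota}$, and the Eichler--Shimura congruence relation, together with the compatibility of the Satake isomorphism with the geometric Frobenius, identifies $r_{\Pi,\iota}(\Frob_v)$ with $\iota(s_v)$ at good primes $v \nmid \ell$.

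The main obstacle is the so-called "totally indefinite" case, when $[F:\QQ]$ is even and $\Pi_v$ is a principal series at every finite place $v$, so that no suitable quaternion algebra exists and no geometric realization is directly available. Here one invokes the argument of Taylor (refined by Blasius--Rogawski, Jarvis, and others): one uses solvable cyclic base change to a totally real quadratic extension $F'/F$ so that $\Pi_{F'}$ now satisfies the hypotheses of the previous case, obtains the Galois representation of $\Gamma_{F'}$, and then descends it to $\Gamma_F$ using Brauer's theorem together with the fact that the representation is uniquely determined by its traces at Frobenius elements in $\Gamma_{F'}$. Alternatively, one passes to a congruence limit through a Hida-family argument: approximate $\Pi$ by paritious forms of regular weight that do admit discrete series at some finite place, attach Galois representations to these, and take an $\ell$-adic limit (a pseudorepresentation argument).

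Finally, local-global compatibility at the unramified primes $v \nmid \ell$ is checked by matching the trace and determinant of $r_{\Pi,\iota}(\Frob_v)$ with $\Nm(v)^{-1/2}\tau_v$ and $\sigma_v$ respectively, which is built into the Eichler--Shimura relation in the geometric case and preserved by base change and congruence arguments in the remaining cases.
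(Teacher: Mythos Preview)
The paper does not actually prove this theorem: it is stated as a well-known result attributed to Blasius--Rogawski and used as input, with no argument given. So there is no ``paper's own proof'' to compare against; one can only assess whether your sketch is a faithful outline of the literature.

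Your outline via Jacquet--Langlands to quaternionic Shimura curves (Carayol's construction) together with a patching argument for the residual case is broadly the standard route, but the base-change step as you state it does not work. Base-changing $\Pi$ to a totally real quadratic extension $F'/F$ preserves principal series at every finite place, so $\Pi_{F'}$ is again principal-series everywhere; and $[F':\QQ]=2[F:\QQ]$ is still even, so you are no better off in finding a suitable quaternion algebra. Descent via Brauer's theorem only helps once you already have the representation over enough subfields, which this step does not supply. The correct argument for the obstructed case is the one you mention only as an alternative: Taylor's congruence/pseudorepresentation method, approximating $\Pi$ $\ell$-adically by forms which are Steinberg at some auxiliary prime. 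You should promote that to the main line of argument and drop the totally-real base-change claim.

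It is also worth noting that the source the paper actually cites, Blasius--Rogawski \cite{BR}, proceeds differently again: rather than quaternionic Shimura curves, they pass to a CM extension $M = FK$ and realise (a twist of) the representation in the cohomology of unitary Shimura varieties, then descend. This is precisely the mechanism underlying the paper's Theorem~\ref{thm:BR}, which is the real workhorse for the non-paritious results that follow. So while your sketch recovers the theorem by the Carayol--Taylor route, it is not the approach the paper's reference uses, and in particular it does not set up the CM-field machinery that the paper relies on in the next subsection.
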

  
 (For concreteness we take $\Frob_v$ to be the \emph{geometric} Frobenius at $v$, inducing $x \mapsto x^{1/\Nm(v)}$ on the residue field, although the validity of the above statement is obviously independent of the choice of geometric or arithmetic Frobenius.)
 
 Via the restriction-of-scalars compatibility above, the conjecture is true for the same representations $\Pi$ regarded as automorphic representations of $G = \Res_{F / E} \GL_2$ for any intermediate field $E$, giving admissible homomorphisms 
 \[ r_{\Pi, E, \iota}: \Gamma_E \to {}^LG(\Qlbar).\]
 
 If $\uk$ is not paritious, but is $E$-paritious for some subfield $E$ (recall that this is \emph{always} the case for $E = \QQ$), then the above theorem says nothing. However, as we have seen above, the restriction of $\Pi$ to the group $G^*$ is $L$-algebraic for a suitable choice of $\ut$, and hence the Buzzard--Gee conjecture predicts Galois representations into ${}^L G^*$. The goal of this section will be to construct these ``extra'' Galois representations.
 
\subsection{Representations over CM fields}

 \begin{theorem}[Blasius--Rogawski]
  \label{thm:BR} 
  Let $\Pi$ be a non-CM irreducible subrepresentation of $S_{\uk, \ut}$, where $k_\sigma \ge 2$ for all $\sigma$. Let $K/\QQ$ be an
  imaginary quadratic extension and set $M=FK$. Then there exists a Hecke
  character $\chi$ of $M$, and a continuous Galois representation
  \[ r_{\Pi,\chi, \iota}: \Gamma_M \to \GL_2(\Qlbar), \] 
  with the following property: let $v \nmid \ell$ be a prime of $F$ which splits in $M / F$ and such that $\Pi$ and $\chi$ are unramified at $v$. Then for each of the two primes $w$ above $v$, the restriction of $r_{\Pi, \chi, \iota}$ to $W_{M_w}$ is conjugate to $\iota(s_v \otimes \chi(w))$.  Furthermore, if $\Pi_E$ is not induced from a character of $\AA_M^\times$, then   $r_{\Pi,\chi,\iota}$ is irreducible.
 \end{theorem}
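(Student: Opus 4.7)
The strategy is to reduce to the classical paritious case (Theorem~\ref{thm:BRT}) by ascending from $F$ to $M=FK$: although the weight of $\Pi$ is not paritious over $F$, the CM field $M$ carries enough additional Hecke characters to compensate for this via twisting.

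I would proceed in three steps. \emph{Step 1 (base change).} Apply Arthur--Clozel cyclic base change along the quadratic (hence solvable) extension $M/F$ to lift $\Pi$ to a cuspidal automorphic representation $\Pi_M$ of $\GL_2(\AA_M)$; cuspidality is preserved because $\Pi$ is not of CM type. \emph{Step 2 (twist).} Using that $M$ is CM, construct a Hecke character $\chi$ of $M$ whose infinity type at one embedding from each complex-conjugate pair of archimedean places is prescribed so that $\Pi_M\otimes\chi$ is regular algebraic in the sense of Clozel, with integer infinity-type exponents at every complex place. This is the CM-field analogue of paritiousness. \emph{Step 3 (invoke BR over CM).} Apply the known construction of $\ell$-adic Galois representations attached to regular algebraic cuspidal automorphic representations of $\GL_2$ over a CM field (Blasius--Rogawski, via Shimura varieties for suitable unitary or quaternionic inner forms) to obtain $r_{\Pi,\chi,\iota}:\Gamma_M\to\GL_2(\Qlbar)$ together with local-global compatibility at all unramified primes of $M$.

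The local compatibility at a prime $v$ of $F$ that splits as $v=ww'$ in $M$ is then automatic from the triviality of base change at split places: $(\Pi_M)_w\cong\Pi_v\cong(\Pi_M)_{w'}$ under the identifications $M_w\cong F_v\cong M_{w'}$, so the Satake parameter of $\Pi_M\otimes\chi$ at $w$ equals $s_v\otimes\chi(w)$. Irreducibility of $r_{\Pi,\chi,\iota}$ then follows from the standard dictionary between automorphic and Galois induction: a reducible $2$-dimensional $\ell$-adic representation of $\Gamma_M$ attached to a cuspidal $\GL_2$-automorphic representation must (up to semisimplification) come from a pair of Hecke characters, which would force $\Pi_M$ to be an automorphic induction from a character, contradicting the hypothesis.

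The main obstacle will be Step 2: one must locate a Hecke character $\chi$ on $M$ with the precise infinity type making the archimedean parameters of $\Pi_M\otimes\chi$ fit exactly the hypotheses of the Blasius--Rogawski existence theorem over a CM field. This is possible because over a CM field one can prescribe essentially half of the infinity-type exponents of a Hecke character freely (the other half being determined by the condition of being algebraic, i.e.\ trivial on a finite-index subgroup of the totally positive units), and this extra archimedean freedom is exactly what cures the failure of paritiousness of $\uk$ over $F$. A secondary technical point is verifying that the cuspidality of $\Pi_M$ really is equivalent to $\Pi_M$ not being an automorphic induction from a character (so that the irreducibility hypothesis of the theorem matches the dichotomy provided by Arthur--Clozel base change).
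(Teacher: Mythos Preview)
Your proposal is correct and follows exactly the approach the paper invokes: the paper's proof simply cites \cite[Theorem 2.6.1]{BR} for existence (which is precisely your Steps 1--3: cyclic base change to $M$, twist by an algebraic Hecke character available because $M$ is CM, then invoke the Galois-representation machinery over CM fields) and \cite[Theorem 4.14, Proposition 5.9]{Mok} for irreducibility. One minor wording fix in your irreducibility paragraph: reducibility of $r_{\Pi,\chi,\iota}$ forces $\Pi_M\otimes\chi$ to be an \emph{isobaric sum} of two Hecke characters (hence non-cuspidal), and it is then $\Pi$---not $\Pi_M$---that is forced to be an automorphic induction from a character of $\AA_M^\times$, matching the theorem's hypothesis.
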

 
 \begin{proof}
  The existence of $r_{\Pi,\chi,\iota}$ comes from \cite[Theorem 2.6.1]{BR},
  while the irreducibility result is proved in the same way as \cite[Theorem 4.14, Proposition 5.9]{Mok} (using the fact that $\Pi$ is assumed to be non-CM, so its base-change to $M$ is cuspidal).
 \end{proof}

 \begin{corollary}
  The representation $\Pi$ is $L$-arithmetic if and only if it is $L$-algebraic.
 \end{corollary}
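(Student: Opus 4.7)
The direction $L$-algebraic $\Rightarrow$ $L$-arithmetic has already been observed above as a direct consequence of Shimura's algebraicity theorem; it remains to prove the converse.

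Suppose that $\Pi$ is $L$-arithmetic. I would first dispose of the case when $\Pi$ is of CM type: such a $\Pi$ is the automorphic induction of a Hecke character $\psi$ of a CM quadratic extension of $F$, and $L$-arithmeticity of $\Pi$ forces the values of $\psi$ at almost all Frobenii to lie in a fixed number field. The standard dictionary between continuous $\ell$-adic characters with algebraic Frobenius eigenvalues and algebraic Hecke characters then shows that $\psi$ is algebraic in the classical sense, whence $L$-algebraicity of $\Pi$.

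Now suppose $\Pi$ is non-CM. Choose any imaginary quadratic field $K$, set $M = FK$, and apply Theorem~\ref{thm:BR} to produce a continuous Galois representation $r_{\Pi, \chi, \iota}: \Gamma_M \to \GL_2(\Qlbar)$ together with a Hecke character $\chi$ of $M$. The key point is that Blasius and Rogawski realise $r_{\Pi, \chi, \iota}$ inside the $\ell$-adic cohomology of an abelian variety over $M$, so it is automatically de Rham at every prime above $\ell$, and in particular its Hodge--Tate weights at such primes are integers. These weights can be read off from the archimedean data: at each embedding $\tau: M \into \Qlbar$ extending an embedding $\sigma: F \into \Qlbar$, they depend on $(k_\sigma, t_\sigma)$ together with the $\tau$-component $n_\tau$ of the infinity type of $\chi$.

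Since $\chi$ is an algebraic Hecke character of the CM field $M$, each $n_\tau$ is an integer and $n_\tau + n_{\bar\tau}$ is constant in $\tau$. Combining these constraints with the integrality of the Hodge--Tate weights forces $t_\sigma \in \tfrac12 + \ZZ$ for every $\sigma$, i.e., $\Pi$ is $L$-algebraic. The main obstacle is the normalisation bookkeeping: one needs to reconcile the paper's convention for $\ut$, the $\tfrac12$-shift built into Buzzard--Gee's $L$-algebraicity condition, and the normalisation inherent in Blasius--Rogawski's geometric construction; once these are aligned, the implication reduces to the integrality statement just indicated.
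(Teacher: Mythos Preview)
Your argument has a genuine gap: the assertion ``Since $\chi$ is an algebraic Hecke character of the CM field $M$'' is not justified, and is in fact false in general. In Theorem~\ref{thm:BR} as stated, $\chi$ is merely \emph{some} Hecke character of $M$; it is chosen precisely so that the twist of $\Pi_M$ by $\chi$ becomes cohomological, and when $\uk$ is not paritious this forces the infinity-type of $\chi$ to be non-integral. Indeed, in the proof of Theorem~\ref{thm:representation} the paper only asserts that the \emph{restriction} $\chi|_{\GL_1(\AA_L)}$ is algebraic, and this under the additional hypothesis that $\Pi$ is $E$-paritious; for $E = F$ this says exactly that $\chi$ is algebraic only when $\Pi$ is already paritious. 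More tellingly, notice that your non-CM argument never invokes the $L$-arithmeticity hypothesis at all: if it were valid as written, it would prove that every non-CM $\Pi$ with $k_\sigma \ge 2$ is $L$-algebraic, contradicting the very existence of non-paritious Hilbert modular forms.

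The paper's proof proceeds quite differently. It uses Theorem~\ref{thm:BR} only to supply infinitely many narrowly principal primes $v = (\varpi)$ with $\tr s_v \ne 0$ (via the residual image of the Galois representation). At such a prime one has
\[
 \tr s_v \;=\; \Nm(v)^{-1/2}\,\varpi^{\underline{2}-\uk-\ut}\, t(\varpi),
\]
where $t(\varpi)$ is the naive Hecke eigenvalue. Shimura's theorem places all the $t(\varpi)$ in a fixed number field; the $L$-arithmeticity hypothesis does the same for the $\tr s_v$. Dividing (which is where non-vanishing is used), one finds that $\varpi^{\,\ut + 1/2}$ lies in a fixed number field for infinitely many totally positive prime elements $\varpi$, and this forces $t_\sigma \in \tfrac12 + \ZZ$ for every $\sigma$. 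Your Hodge--Tate approach could be salvaged, but only by first using $L$-arithmeticity to deduce that $\chi$ takes algebraic values at almost all primes and then arguing that this forces $\chi$ to be algebraic---an extra step of roughly the same nature as the paper's direct argument.
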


 \begin{proof}
  As mentioned in
   Remark \ref{rem:LaritvsLalg}, Shimura's algebraicity results show 
   that $L$-algebraic implies $L$-arithmetic. For the converse, the
   argument given in \cite{buzzardgee} generalizes as follows: by
   Theorem \ref{thm:BR} there are infintely many principal primes $v$ for which
   $s_v$ is non-zero (look at the residual representation at a prime
   $\ell \neq 2$ and primes mapping to the identity have this
   property). If $\Pi$ is $L$-arithmetic, by
   Shimura's theorem the set $\{v^{\ut}\Nm(v)\}$ lies in
   a finite extension, so $\ut \in \frac{1}{2}+\ZZ$.
 \end{proof}

 Before stating the main result, we need an auxiliary Lemma.
 \begin{lemma}
   Let $U,V$ be groups, with $Z(V)$ $2$-divisible, and let $U'\subset U$ be
   an index $2$ subgroup. Let $\psi:U'\to V$ be a morphism satisfying:
   \begin{itemize}
   \item it has big image, i.e.
     $\{v \in V \; : \; v \psi(u) v^{-1}=\psi(u) \, \forall u
     \in U'\}=Z(V)$. 
   \item The homomorphism $\psi^\mu: U' \to V$ defined by $\psi^\mu(u)=\psi(\mu u \mu^{-1})$ is conjugate in $V$ to $\psi$.
   \end{itemize}
Then $\psi$ extends to a morphism $U \to V$.
\label{lemma:extension}
 \end{lemma}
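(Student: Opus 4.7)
My plan is to pick a representative $\mu \in U \setminus U'$, choose a candidate value $v \in V$ for the image of $\mu$ that implements the required conjugation, and adjust it by a central square root so that the relation $v^2 = \psi(\mu^2)$ is also satisfied; then $\tilde\psi(\mu^\epsilon u) := v^\epsilon \psi(u)$ for $u \in U'$, $\epsilon \in \{0,1\}$ will be a well-defined extension.

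First I would unpack the hypothesis that $\psi^\mu$ is conjugate to $\psi$: this gives some $v_0 \in V$ with $v_0 \psi(u) v_0^{-1} = \psi(\mu u \mu^{-1})$ for every $u \in U'$. Any such $v_0$ automatically takes care of the cross-relation between $\mu$ and $U'$; what is not yet guaranteed is the square relation. To measure the failure, I consider $\xi := v_0^2 \psi(\mu^2)^{-1}$. Conjugating $\psi(u)$ twice by $v_0$ gives $\psi(\mu^2 u \mu^{-2})$, which also equals $\psi(\mu^2) \psi(u) \psi(\mu^2)^{-1}$; hence $\xi$ centralizes $\psi(U')$. The big-image hypothesis then forces $\xi \in Z(V)$.

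Next I would use that $Z(V)$ is $2$-divisible to pick a central element $\eta$ with $\eta^2 = \xi^{-1}$, and set $v := \eta v_0$. Because $\eta$ is central, $v$ still implements the conjugation: $v \psi(u) v^{-1} = \psi(\mu u \mu^{-1})$; and by construction $v^2 = \eta^2 v_0^2 = \xi^{-1} v_0^2 = \psi(\mu^2)$. Finally I define
\[
\tilde\psi(u) = \psi(u)\ \text{for } u \in U', \qquad \tilde\psi(\mu u) = v \psi(u) \ \text{for } u \in U',
\]
and verify on the four cases (elements in $U'$ versus in the nontrivial coset) that this is a homomorphism; each case reduces either to $\psi$ being a homomorphism, or to $v \psi(u) v^{-1} = \psi(\mu u \mu^{-1})$, or to $v^2 = \psi(\mu^2)$.

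The main subtle point will be step two, the central-square-root modification: we need precisely the $2$-divisibility of $Z(V)$ to absorb the defect $\xi$, and precisely the big-image hypothesis to know that the defect lives in $Z(V)$ in the first place. Everything else is a routine verification, and the choice of coset representative $\mu$ does not matter because changing $\mu$ by an element of $U'$ only changes $v$ by a corresponding factor $\psi(u)$ on the left.
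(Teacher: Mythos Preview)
Your proposal is correct and follows essentially the same approach as the paper: choose $\mu \in U \setminus U'$, pick $v_0$ implementing the conjugation, observe via the big-image hypothesis that $v_0^2 \psi(\mu^2)^{-1}$ is central, and correct $v_0$ by a central square root so that the square relation holds. Your write-up is in fact slightly more thorough than the paper's, since you explicitly mention the four-case verification that the resulting $\tilde\psi$ is a homomorphism, whereas the paper stops after defining $\psi(\mu)$.
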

 \begin{proof}
   Let $\mu$ be an element of $U - U'$. The second condition means that there exists $v \in V$ such that 
\[
v\psi(u)v^{-1} = \psi(\mu u \mu^{-1})\quad \forall u
     \in U'.
\]
The first condition implies that if such an extension exists, then
$\psi(\mu) = v z$, for some $z \in Z(V)$. The equality
$\psi(\mu^2 u \mu^{-2}) = v^2 \psi(u)v^{-2}$ together with the second
condition implies that $\psi(\mu^2) = v^2 z$ for some $z \in
Z(V)$. Since $Z(V)$ is $2$-divisible, let $\tilde{z} \in Z(V)$ be a
square root of $z$, and define $\psi(\mu)=v \tilde{z}$.
 \end{proof}
 \begin{theorem}
\label{thm:representation}
Let $\Pi$ be a non-CM-type irreducible subrepresentation of $S_{\uk,\ut}$, and
$E\subset F$ such that the restricted representation $\Pi^*$ is $L$-algebraic. Let $\iota : \CC \to \overline{\QQ_\ell}$ an
isomorphism. Then there is a Galois representation 
\[
r_{\Pi, \iota}^*:\Gamma_E \to {}^L G^*(\Qlbar),
\]
whose local factors at unramified places $v$ are the $\iota(r_v^*)$.
\end{theorem}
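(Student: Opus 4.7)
The plan is to build $r^*_{\Pi,\iota}$ from the Blasius--Rogawski representation of Theorem \ref{thm:BR} in three successive moves: induce from $\Gamma_M$ up to $\Gamma_L$ for a well-chosen auxiliary CM field, project to ${}^L G^*$, and finally extend from $\Gamma_L$ to $\Gamma_E$ via the abstract Lemma \ref{lemma:extension}. Concretely, I would first choose an imaginary quadratic $K$ linearly disjoint from $F$ and set $M = FK$ and $L = EK$. Then $[M:L] = [F:E] = d$, the field $L/E$ is a quadratic CM extension, and $\Res_{M/L}\GL_2$ is canonically identified with $G_L := G\times_E L$. Theorem \ref{thm:BR} then supplies, for a Hecke character $\chi$ of $M$ whose infinity type corrects the fractional shifts in $\ut$ (using that $L$, being CM, has enough characters at infinity to absorb non-paritiousness), a representation $r_M := r_{\Pi,\chi,\iota}: \Gamma_M \to \GL_2(\Qlbar)$.

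Next, applying Proposition \ref{prop:induceLhom} to the degree-$d$ overgroup inclusion $\Gamma_M \subset \Gamma_L$ produces $\tilde r_L := \Ind_{M/L}(r_M): \Gamma_L \to {}^L G_L(\Qlbar)$, and composing with the surjection $\pi: {}^L G_L \twoheadrightarrow {}^L G^*_L = {}^L G_L / K$ yields $\tilde r^*_L: \Gamma_L \to {}^L G^*_L(\Qlbar)$. The key observation at this stage is that quotienting by $K$ collapses the individual per-factor contributions of $\chi$ into a single norm-type quantity; combined with the $E$-paritiousness of $\uk$, this should force $\tilde r^*_L$ to depend on $\chi$ only through its $\widehat{G^*}$-conjugacy class, and in particular to be unchanged (up to inner conjugation) if $\chi$ is replaced by any other admissible choice.

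The heart of the argument is then to apply Lemma \ref{lemma:extension} with $U = \Gamma_E$, $U' = \Gamma_L$ (normal of index $2$), $V = {}^L G^*(\Qlbar)$, and $\psi = \tilde r^*_L$. Two hypotheses need checking. For \emph{big image}, I would use the non-CM assumption: by Theorem \ref{thm:BR} this makes $r_M$ irreducible, after which a standard argument (using that $\GL_2$-irreducibility is preserved by induction and that the non-CM hypothesis prevents spurious permutation symmetries in the $d$-fold product) shows that the centraliser of the image of $\tilde r^*_L$ in ${}^L G^*(\Qlbar)$ collapses to $Z({}^L G^*(\Qlbar))$. For the \emph{$\mu$-conjugacy}, with $\mu \in \Gamma_E$ any lift of the nontrivial element of $\Gal(L/E)$, the representation $\psi^\mu$ is precisely what one obtains by replacing $\chi$ by its conjugate $\chi^\mu$ in the input to Theorem \ref{thm:BR} (using that $\mu$ preserves $M$ and normalises $\Gamma_M$ inside $\Gamma_E$); the independence-of-$\chi$ claim from the previous paragraph then supplies the required inner conjugator $v \in V$. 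Local-global compatibility at an unramified prime $v$ of $E$ is finally a case analysis over the splitting of $v$ in $L/E$ and in $M/L$, using the explicit Frobenius formula from the proof of Proposition \ref{prop:induceLhom}(iii) together with the compatibility clause of Theorem \ref{thm:BR}.

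The step I expect to be the main obstacle is the independence-of-$\chi$ (equivalently, $\mu$-conjugacy) statement: this is exactly where the geometry of the passage from $G$ to $G^*$ does its work, allowing a Blasius--Rogawski construction that cannot descend to $\Gamma_F$ on the nose to descend nevertheless to $\Gamma_E$ once one quotients by $K$. Verifying it requires a careful comparison of the representations attached to $\chi$ and $\chi^\mu$, showing that they differ by a character-valued cocycle that becomes trivial after passing to $\widehat{G^*}$; everything else in the plan is then either a direct appeal to \cite{BR} and Proposition \ref{prop:induceLhom} or a routine centraliser computation enabled by the non-CM hypothesis.
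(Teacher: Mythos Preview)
Your overall architecture matches the paper's: pick an auxiliary imaginary quadratic $K$, use Blasius--Rogawski over $M=FK$, induce to $L=EK$, project to ${}^L G^*$, and extend across the quadratic step $L/E$ via Lemma~\ref{lemma:extension}. But there is a genuine gap at precisely the point you flag as the crux.

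Your claim is that after projecting to ${}^L G^*$ the representation $\tilde r^*_L$ is already independent of the auxiliary character $\chi$ up to inner conjugation. This is false. Passing from $\hat G$ to $\hat G^* = \hat G / K$ does collapse the per-factor $\chi$-contributions into a single ``norm'' contribution, but that contribution is a \emph{nontrivial central twist} by the Galois character attached to $\chi|_{\AA_L^\times}$, and central twists are not realised by inner conjugation in ${}^L G^*(\Qlbar)$. Concretely, replacing $\chi$ by $\chi\psi$ replaces $\tilde r^*_L$ by $\tilde r^*_L \otimes r_{\psi|_{\AA_L^\times},\iota}$; taking $\psi = \chi^\mu/\chi$ shows that $(\tilde r^*_L)^\mu$ and $\tilde r^*_L$ differ by the central character attached to $(\chi|_{\AA_L^\times})^\mu / (\chi|_{\AA_L^\times})$, which has no reason to be trivial. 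So the $\mu$-conjugacy hypothesis of Lemma~\ref{lemma:extension} cannot be verified for $\tilde r^*_L$ as you have defined it. The paper's fix is to insert an explicit untwisting step: because $\uk$ is $E$-paritious, the restriction $\chi|_{\AA_L^\times}$ is \emph{algebraic}, hence has an $\ell$-adic avatar $r_{\chi,\iota}:\Gamma_L\to\Qlbar^\times$, and one replaces $\tilde r^*_L$ by $r^*_{\Pi,K,\iota}\coloneqq \tilde r^*_L \otimes r_{\chi,\iota}^{-1}$. This twisted object is genuinely independent of $\chi$, and then your $\mu$-conjugacy argument (via $(r_{\Pi,\chi,\iota})^\mu \cong r_{\Pi,\chi^\mu,\iota}$) goes through. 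This is also where $E$-paritiousness is actually used; in your outline it plays no real role.

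There is a second, smaller omission. Lemma~\ref{lemma:extension} produces an extension to $\Gamma_E$ only up to a twist by the quadratic character of $K/\QQ$, and the Satake compatibility you obtain from Theorem~\ref{thm:BR} is only at primes split in $K$. The paper handles both issues simultaneously by varying $K$ over an infinite family with pairwise disjoint ramification and patching the resulting extensions (its Proposition following Theorem~\ref{thm:representation}); your ``case analysis over the splitting of $v$'' cannot substitute for this, since for a single fixed $K$ you have no direct control at the inert primes.
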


\begin{proof}
 As in Theorem \ref{thm:BR}, we choose an imaginary quadratic field $K$, and a character $\chi$ of $\AA_M^\times$ (where $M = FK$), such that there is a Galois representation
 \[
  r_{\Pi,\chi,\iota}:\Gamma_M \to \GL_2(\Qlbar)
 \]
 whose Satake parameters at the split primes are determined by $\Pi$ and $\chi$. Let $L = KE$. By Proposition \ref{prop:induceLhom} we can extend $r_{\Pi,\chi,\iota}$
 to an admissible homomorphism
 \[ \tilde r_{\Pi,\chi,\iota}: \Gamma_L \to {}^L G(\Qlbar). \]
 Let us write $r^*_{\Pi,\chi,\iota}$ for the projection of $\tilde r_{\Pi, \chi, \iota}$ into the quotient ${}^L G^*(\Qlbar)$.
 
 Since $\Pi$ is $E$-paritious, the Hecke character $\chi|_{\GL_1(\AA_L)}$ is algebraic. Hence it has a Galois
representation $r_{\chi, \iota}:\Gamma_E \to \GL_1(\Qlbar)$ attached to it. We identify $\GL_1(\Qlbar)$ with the centre of $\hat G^*(\Qlbar)$, and we consider the ``tensor product'' representation 
 \[
  r^*_{\Pi,K,\iota} \coloneqq r^*_{\Pi,\chi,\iota} \otimes r_{\chi^{-1}, \iota}:
  \Gamma_{EK} \to {}^L G(\Qlbar).
 \]
 where by ``tensor product'' we mean the component-wise product in
 $\hat{G}$, which goes to the quotient (as it lies in the center). 
 
 Let us check that this morphism $r^*_{\Pi,K,\iota}$ is independent of the choice of the character $\chi$. If we multiply $\chi$ by an algebraic character $\psi$ of $\AA_M^\times$, then $\psi$ has an associated Galois representation $\Gamma_M \to \GL_1(\Qlbar)$, and we may induce this to a homomorphism $\Gamma_L \to (\GL_1)^{[M : L]} \rtimes \Gal(M / L)$. If we compose this homomorphism with the product map $(\GL_1)^{[M : L]} \to \GL_1$, then the action of $\Gal(M / L)$ becomes trivial, and one checks easily that the result is exactly the Galois representation $\Gamma_L \to \GL_1(\Qlbar)$ associated to $\psi|_{\AA_L^\times}$. Hence the twists cancel out, showing that the representation $r^*_{\Pi,K,\iota}$ is independent of the choice.
 
 Because of the irreducibility of $r_{\Pi, \chi, \iota}$, the centraliser of the image of $r^*_{\Pi,K,\iota}$ is the centre of ${}^L G^*(\Qlbar)$, which is just $\Qlbar^*$ and is thus certainly 2-divisible. So we are in a position to apply the preceding lemma.
 
 Let $\tau$ denote a lift to $\Gamma_E$ of the complex conjugation automorphism of $K / \QQ$. Since $F$ is linearly disjoint from $K$ (and $K$ is Galois), we can and do assume that $\tau$ acts trivially on the dual group $\hat G$. Let $(r^*_{\Pi,\chi, \iota})^\tau$ denote the morphism given by
$(r^*_{\Pi,K,\iota})^\tau(\sigma)=r^*_{\Pi,K,\iota}(\tau \sigma \tau^{-1})$. We claim that $(r^*_{\Pi,K,\iota})^\tau$ is conjugate to $r^*_{\Pi,K,\iota}$. 

Tracing through the definitions, we find that $(r^*_{\Pi,K,\iota})^\tau$ is obtained by induction and twisting from the homomorphism $(r_{\Pi, \chi, \iota})^\tau: \Gamma_M \to \GL_2(\Qlbar)$. Since the representations $(r_{\Pi, \chi, \iota})^\tau$ and $r_{\Pi, \tau(\chi), \iota}$ are both irreducible and their traces agree on the Frobenii at split primes, they are conjugate by an element of $\GL_2(\Qlbar)$. Since the construction of $r_{\Pi, K, \iota}^*$ is independent of the choice of $\tau$, as we have seen, this gives the required conjugacy between $r_{\Pi, K, \iota}^*$ and $(r_{\Pi, K, \iota}^*)^\tau$. Hence $r_{\Pi, K, \iota}^*$ extends to a representation of $\Gamma_E$, uniquely determined up to twisting by the quadratic character associated to $K / \QQ$. 

By construction, $r_{\Pi, K, \iota}^*$ has the desired Satake parameters at all but finitely many primes split in $L / E$. It only remains to prove that the quadratic twists may be chosen in a uniform way, so that the morphisms obtained by extending $r^*_{\Pi,K\iota}$ for different choices of $K$ coincide; this will imply that the resulting representation has the required Satake parameters at every prime (since for any given prime $q$, we may choose $K$ such that $q$ is split in $K$). This will be carried out in the next proposition.
\end{proof}

\begin{proposition}
  Let $K_i$ be an infinite list of imaginary quadratic fields, whose
  ramification set is pairwise disjoint and disjoint from the
  ramification set of $F$, and for each $K_i$ let
  $r^*_{\Pi,K_i,\iota}:\Gamma_{EK_i} \to {}^LG^*(\QQ_\ell)$ be the morphism constructed
  in the previous proof. Then there exists a morphism
  $r_{\Pi,\iota}^* : \Gamma_E \to {}^L{G^*}(\overline{\QQ_\ell})$ whose restriction to $\Gamma_{EK_i}$ is isomorphic to $r^*_{\Pi,K_i,\iota}$ for every $i$. 
\end{proposition}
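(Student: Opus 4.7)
The plan is to show that any single extension $\rho\colon\Gamma_E\to {}^L G^*(\Qlbar)$ of $r^*_{\Pi, K_1, \iota}$ produced by the preceding theorem already serves as the desired global $r^*_{\Pi, \iota}$. Concretely, I claim that for every $i\ge 2$, the restriction $\rho|_{\Gamma_{EK_i}}$ is $\hat G^*(\Qlbar)$-conjugate to $r^*_{\Pi, K_i, \iota}$; once this is established, setting $r^*_{\Pi, \iota}:=\rho$ completes the proof. Note that no uniform choice of extensions across the different $K_i$ is required, because the quadratic-twist ambiguity in extending $r^*_{\Pi, K_i, \iota}$ to $\Gamma_E$ becomes trivial on restriction to $\Gamma_{EK_i}$, so there is a single unambiguous target to match.

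To verify the claim for a fixed $i$, I would compare the two admissible homomorphisms at a large common set of Frobenii. The disjoint-ramification hypothesis, together with the fact that $E$ is totally real while $K_1$ and $K_i$ are imaginary quadratic, makes $K_1$, $K_i$ and $E$ pairwise linearly disjoint over $\QQ$. Hence by Chebotarev the primes $v$ of $E$ that are unramified and simultaneously split in both $K_1$ and $K_i$ form a set of density $1/4$. At any such $v$, and for either prime $w$ of $EK_i$ above $v$, both $\rho|_{\Gamma_{EK_i}}(\Frob_w)$ and $r^*_{\Pi, K_i, \iota}(\Frob_w)$ lie in the common $\hat G^*(\Qlbar)$-conjugacy class of $\iota(s_v^*)$: the preceding theorem guarantees this for $\rho$ because $v$ splits in $K_1$, while for $r^*_{\Pi, K_i, \iota}$ this is part of its defining property.

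To upgrade this Frobenius-wise agreement into a global $\hat G^*(\Qlbar)$-conjugacy of admissible homomorphisms, I would trace back through the induction-and-projection construction of the $r^*$'s to the underlying Blasius--Rogawski two-dimensional representations. On restriction to $\Gamma_{FK_1K_i}$ both admissible homomorphisms canonically lift through $\hat G\twoheadrightarrow\hat G^*$ to representations built from the two-dimensional representations supplied by Theorem~\ref{thm:BR}, which are irreducible. Two semisimple $\Qlbar$-linear representations of a profinite group that agree in trace on a positive-density set of Frobenii are isomorphic by Brauer--Nesbitt combined with Chebotarev, so the underlying two-dimensional data are intertwined; irreducibility pins the intertwining operator down to a scalar, and pushing this intertwining forward through the induction and the quotient by $K$ produces the required $\hat G^*(\Qlbar)$-conjugacy on $\Gamma_{EK_i}$. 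The main obstacle is this final descent step: carefully tracking how the scalar ambiguity at the two-dimensional level cancels through the twists by the Hecke characters $\chi_1$ and $\chi_i$, so that a single conjugating element of $\hat G^*(\Qlbar)$ indeed intertwines the two admissible homomorphisms globally rather than merely after composition with some auxiliary representation of ${}^L G^*$.
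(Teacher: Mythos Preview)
Your central claim---that \emph{any} extension $\rho$ of $r^*_{\Pi,K_1,\iota}$ to $\Gamma_E$ already restricts to $r^*_{\Pi,K_i,\iota}$ on $\Gamma_{EK_i}$---cannot be correct, and the gap is exactly in the Frobenius comparison. The two possible extensions of $r^*_{\Pi,K_1,\iota}$ differ by the quadratic character $\chi_{K_1}$ of $\Gal(EK_1/E)$. Since $K_1$ and $K_i$ are linearly disjoint, $\chi_{K_1}|_{\Gamma_{EK_i}}$ is \emph{nontrivial}, so the two restrictions to $\Gamma_{EK_i}$ are genuinely different (the non-CM hypothesis rules out any self-twist). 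Hence at most one of them can be conjugate to $r^*_{\Pi,K_i,\iota}$, and your argument, which applies symmetrically to both, cannot decide which.

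Concretely, the assertion ``two semisimple $\ell$-adic representations that agree in trace on a positive-density set of Frobenii are isomorphic'' is false: $\rho$ and $\rho\otimes\chi_{K_1}$ agree on the density-$1/2$ set where $\chi_{K_1}=1$, yet are not isomorphic. Your comparison set---primes of $EK_i$ lying under primes split in $K_1$---is precisely this half, so the twist by $\chi_{K_1}|_{\Gamma_{EK_i}}$ is invisible to it. Passing to $\Gamma_{FK_1K_i}$ does give a genuine isomorphism there, but lifting back to $\Gamma_{EK_i}$ only determines the representation up to a character of $\Gal(FK_1K_i/EK_i)$, which is the very ambiguity you need to kill. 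The paper resolves this by choosing arbitrary extensions $\varrho_{\Pi,K_i,\iota}$, comparing $\varrho_{\Pi,K_1,\iota}$ with $\varrho_{\Pi,K_2,\iota}$ to extract an explicit correcting character $\alpha_{1,2}$ of $\Gal(EK_1/E)$, and then checking (using the disjoint-ramification hypothesis) that the single twist $\varrho_{\Pi,K_1,\iota}\otimes\alpha_{1,2}$ restricts correctly on $\Gamma_{EK_n}$ for every $n$.
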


 \begin{proof} 
  The result resembles that of \cite[Proposition 4.3.1]{BR} and so does its proof. As pointed already each $r^*_{\Pi,K_i,\iota}$ can be extended, non-uniquely, to $\Gamma_E$; let $\varrho_{\Pi, K_i, \iota}$ be such an extension. Note that $\varrho_{\Pi,K_1,\iota}|_{\Gamma_{FK_1K_2}} \simeq  \varrho_{\Pi,K_2,\iota}|_{\Gamma_{FK_1K_2}}$ (using irreducibility, and comparing traces of Frobenii at split primes). Our ramification conditions imply that there are characters $\alpha_{1,2}:\Gal(EK_1/E)\to \CC^\times$ and $\beta_{2,1}:\Gal(EK_2/E)\to \CC^\times$ such that
  \[ 
   \varrho_{\Pi,K_1,\iota} \otimes \alpha_{1,2}\simeq \varrho_{\Pi,K_2,\iota} \otimes \beta_{2,1}.
  \]
  Fix one imaginary quadratic field $K_1$ and let $K_n$ vary. The restriction of $\alpha_{1,2}$ (as a character of $\Gal(\QQbar/EK_1)$) to $\Gal(EK_1K_n/K_n)$ equals that of $\alpha_{1,n}$. Then the representation $\varrho_{\Pi,K_1} \otimes \alpha_{1,2}$ satisfies that its restriction to any $\Gamma_{K_n}$ is isomorphic to $\varrho_{\Pi,K_n,\iota}$, so we define 
  \[
   r^*_{\Pi,\iota} = \varrho_{\Pi,K_1, \iota} \otimes \alpha_{1,2}.\qedhere
  \]
 \end{proof}

 This completes the proof of the Buzzard--Gee conjecture for representations of $G^*$ arising from $E$-paritious Hilbert modular forms.

 \subsection{Realising the Asai representation geometrically}
 
  Composing the representation $r^*_{\Pi, \iota}$ constructed in the preceding subsection with the Asai representation ${}^L G^*(\Qlbar) \to \GL_{2^d}(\Qlbar)$, we obtain a $2^d$-dimensional $\ell$-adic representation of $\Gamma_E$, the \emph{Asai Galois representation} associated to $\Pi$. 
  
  In the special case $E = \QQ$, this representation can be realised geometrically. Attached to the group $G^*$ is a compatible family of Shimura varieties (of varying levels), which are $d$-dimensional algebraic varieties defined over $\QQ$. The main result of \cite{brylinskilabesse84} shows that if the level is taken small enough, the Asai Galois representation of $\Pi$ is realised (up to semisimplification\footnote{If $E = \QQ$ then the semisimplification can be dispensed with, since it has been shown by Nekovar \cite{nekovar} that the $\ell$-adic cohomology is semi-simple.}) as a direct summand of the middle-degree $\ell$-adic intersection cohomology of this Shimura variety (with coefficients in some locally-constant sheaf determined by the weight $\uk, \ut$).  Hence the content of Theorem \ref{thm:representation} is to show that this representation factors naturally through the group ${}^L G^*$.
  
  If $\QQ \subsetneq E \subsetneq F$ then standard conjectures predict that the Asai Galois representation should still be realisable geometrically, via Shimura varieties attached to quaternion algebras. Let us suppose that at least one of the following conditions holds:
  \begin{enumerate}[(i)]
   \item The degree $d = [F : E]$ is even;
   \item The degree $[E : \QQ]$ is odd; 
   \item There is a finite place $v$ of $F$ at which the local factor $\Pi_v$ is in the discrete series.
  \end{enumerate}
   
  We then choose an infinite place $\tau$ of $E$, and a quaternion algebra $B$ over $F$ such that $B \otimes_{F, \sigma} \RR$ is split for $\sigma \mid \tau$ and ramified for all other $\sigma \in \Sigma_F$. If either (i) or (ii) holds there is a unique such $B$ which is unramified at every finite place; if neither (i) nor (ii) holds, but (iii) does, then we can take $B$ to ramify additionally at $v$. Then $\Pi$ admits a Jacquet--Langlands transfer to $B^\times$, and the restriction of this representation to the group $H^*$ of elements of $B^\times$ whose reduced norm is in $E^\times \subset F^\times$ is $L$-algebraic. 
   
  Attached to $H^*$, there is a Shimura variety $\mathcal{X}$ of dimension $d$, whose reflex field is $E$. It is expected that the Asai Galois representation of $\Pi$ should appear in the middle-degree $\ell$-adic cohomology of $\mathcal{X}$, and a conditional proof of this has been given by Langlands \cite{langlands79} modulo a conjecture describing the action of Frobenius on the special fibre.

\section{Relation to Patrikis' construction}
\label{patrikis}

 In the above construction, we verified the Buzzard--Gee conjecture for the restriction of $\Pi$ to the group $G^* \subseteq \Res_{F/E} \GL_2$. One can also restrict further, all the way to the group $G^0 = \Res_{F/E} \SL_2$. This case has also been treated by Patrikis, who works more generally with essentially self-dual automorphic representations of $\GL_n$ and $\SL_n$ for general $n$ \cite[Corollary 5.10]{patrikis-sign}.
 
 For a Hilbert modular automorphic representation $\Pi$, it follows from the $n = 2$ case of Patrikis' result that there is an admissible homomorphism $\Gamma_F \to \operatorname{PGL}_2(\Qlbar)$, or (equivalently, via the restriction-of-scalars formalism of Corollary \ref{cor:resscalars}) an admissible homomorphism $\Gamma_E \to {}^L G_0$, with the appropriate Satake parameters. This can be seen as a consequence of Theorem \ref{thm:representation} by composing with the quotient map ${}^L G^* \to \dfrac{{}^L G^*}{Z(\hat G^*)} = {}^L G^0$.
 
 \begin{remark}
  Patrikis' work suggests that a generalisation of Theorem \ref{thm:representation} should hold for any mixed-parity, regular, essentially self-dual, cuspidal automorphic representation $\Pi$ of $\GL_n / F$. This could potentially be proved, by essentially the same method as above, if one knew that for sufficiently many CM extensions $M$ of $F$, the representations $\Gamma_M \to \operatorname{GL}_n(\Qlbar)$ associated to $L$-algebraic twists of the base change of $\Pi$ to $M$ were irreducible.
 \end{remark}

\section{The case \texorpdfstring{$[F : E] = 2$}{[F : E] = 2}}

 If $F / E$ is a quadratic extension, then the $L$-group ${}^L G^*$ has a particularly simple description. In this case, $\hat G^*$ is the quotient of $\GL_2 \times \GL_2$ by the subgroup of elements of the form $\left( \stbt z00z, \stbt z00z^{-1}\right)$. 
 
 An explicit model for the Asai representation of $\hat G = \GL_2 \times \GL_2$ is given by the action on $2 \times 2$ matrices, via $(g_1, g_2)( m) = g_1 \cdot m \cdot g_2^t$. This factors through $\hat G^*$, and is a faithful representation of $\hat G^*$. We may extend this to a representation of ${}^L G^*$, factoring through the quotient $\hat G^* \rtimes \Gal(F / E)$, by letting the non-trivial element $\sigma \in \Gal(F / E)$ act as $m \mapsto m^t$.
 
 This representation preserves the quadratic form $q(m) = \det m$ up to scalar multiplication, with the multiplier character given by $(g_1, g_2) \mapsto \det(g_1) \det(g_2)$. Thus we may regard this representation as a homomorphism $\hat G^* \rtimes \Gal(F / E) \to \GO_4$. In fact it is an isomorphism between these groups \cite[\S 1]{Ramakrishnan}. The identity component $\GSO_4$ thus corresponds to $\hat G^*$. We thus obtain the following result:
 
 \begin{theorem}
  Let $F / E$ be a quadratic extension of totally real fields, and $\Pi$ a non-CM Hilbert modular automorphic representation of $\GL_2 / F$ whose restriction to $G^*$ is $L$-algebraic. Then, for every embedding $\iota: \QQbar \into \Qlbar$, there exists a Galois representation
  \[ r^*_{\Pi, \iota}: \Gamma_E \to \GO_4(\Qlbar) \]
  such that for primes $w = w_1 w_2$ of $E$ split in $F$,  $r^*_{\Pi, \iota}(\Frob_w)$ is conjugate to the image of $(s_{w_1}(\Pi), s_{w_2}(\Pi))$ under the map $\GL_2 \times \GL_2 \to \GO_4$.
 \end{theorem}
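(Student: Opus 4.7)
The plan is to combine Theorem \ref{thm:representation} with the exceptional isomorphism ${}^L G^* \cong \GO_4$ that was just described. Concretely, since $\Pi$ is non-CM and its restriction to $G^*$ is $L$-algebraic, Theorem \ref{thm:representation} (applied with the subfield $E$) yields an admissible homomorphism
\[ r^*_{\Pi, \iota} : \Gamma_E \to {}^L G^*(\Qlbar) \]
whose local parameter at every unramified prime $w$ of $E$ is the image in ${}^L G^*$ of the Langlands--Satake parameter $\tilde s_w$ of $\tilde \Pi_w$. I would then compose this with the isomorphism ${}^L G^* \xrightarrow{\sim} \GO_4$ described in the paragraph above the theorem (acting on $2\times 2$ matrices via $(g_1,g_2)(m) = g_1 m g_2^t$, with $\sigma \in \Gal(F/E)$ acting by transposition) to produce the desired homomorphism with values in $\GO_4(\Qlbar)$.

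The remaining task is to identify the image of Frobenius at a split prime. If $w = w_1 w_2$ is split in $F/E$, then the Langlands--Satake parameter of $\tilde\Pi_w$ factors through the identity component $\hat G = \GL_2 \times \GL_2$ (because the geometric Frobenius lies in $\Gamma_F \subset \Gamma_E$), and under the decomposition of $\hat G$ into factors indexed by the primes above $w$ described in Proposition \ref{prop:induceLhom}, it is explicitly the pair $(s_{w_1}(\Pi), s_{w_2}(\Pi))$. Tracing this through the isomorphism $\hat G^* \cong \GSO_4$ gives precisely the stated image in $\GO_4$.

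There is essentially no serious obstacle: the construction of $r^*_{\Pi,\iota}$ is the content of Theorem \ref{thm:representation}, and the passage to $\GO_4$ is a matter of rewriting via the exceptional isomorphism. The only mild care needed is to check that the non-trivial element of $\Gal(F/E)$ is actually sent to the transposition action, so that $r^*_{\Pi,\iota}$ really does land in $\GO_4$ and not just in $\GSO_4 \rtimes \{\pm 1\}$ in some other way; this is what forces us to use the specific model of the Asai representation recalled just before the theorem, together with the compatibility between the $\Gal(F/E)$-action on $\hat G$ (permuting the two $\GL_2$-factors) and transposition of $2\times 2$ matrices under the identification $\hat G^* \cong \GSO_4$ of \cite[\S 1]{Ramakrishnan}.
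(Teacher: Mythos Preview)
Your approach is correct and is exactly what the paper does: the theorem is stated immediately after the exceptional isomorphism as a direct consequence of Theorem \ref{thm:representation} composed with the Asai representation. One small terminological slip: ${}^L G^* = \hat G^* \rtimes \Gamma_E$ is not itself isomorphic to $\GO_4$; rather, the Asai representation ${}^L G^* \to \GO_4$ factors through the finite quotient $\hat G^* \rtimes \Gal(F/E)$, and it is this quotient that is identified with $\GO_4$.
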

 
 Let $\nu$ denote the orthogonal multiplier $\GO_4 \to \mathbf{G}_m$. Then $\nu \circ r^*_{\Pi, \iota}$ is the $\ell$-adic Galois character corresponding (via $\iota$) to the algebraic Gr\"ossencharacter $\omega |_{\AA_E^\times}$, where $\omega: F^\times \backslash \AF^\times \to \CC^\times$ is the central character of $\Pi$. (Note that $\omega$ will not generally be algebraic as a Gr\"ossencharacter of $F$, but its restriction to $E$ will be.)
 
 The determinant of the standard 4-dimensional representation of $\GO_4$ agrees with $\nu^2$ on $\GSO_4$, but not on $\GO_4$; the determinant of $r^*_{\Pi, \iota}$ is therefore given by $\omega^2 |_{\AA_E^\times} \cdot \chi_{F / E}$, where $\chi_{F / E}$ is the character associated to our quadratic extension.
 
 \begin{remark}
  For $d > 2$ we do not know of a simple description of the image of ${}^L G^*$ in $\GL_{2^d}$.
 \end{remark}

 \section{Computing Hilbert modular forms and Quaternion groups}

 We now explain how these non-paritious Hilbert modular forms can be computed explicitly. For computational purposes, it is better to work with a definite
 quaternion algebra, rather than with the Hilbert modular variety; so we
 need to explain how to explicitly compute examples of non-paritious
 automorphic forms for definite quaternion algebras over $F$, extending the algorithms explained in \cite{DV} for the paritious case.
 
 \subsection{Groups} 
 
 Let $B$ be a totally definite quaternion algebra over $F$, of discriminant
 $\fd_B$, and let $\cO_B$ be a maximal order in $B$. Then
 $H = \Res_{F / E} B^\times$ is an algebraic group over $E$; it is an
 inner form of $G = \Res_{F/E} \GL_2$, and in particular it has the
 same $L$-group as $G$.
 
 Let $H^*$ be the fibre product of $H$ with $\GL_1$ over
 $\Res_{F/ E} \GL_1$ (with respect to the reduced norm map
 $H \to \Res_{F / E} \GL_1$); this is an inner form of $G^*$. The
 $E$-paritious Hilbert modular forms will give rise to automorphic forms for $H$ which are not algebraic, but become algebraic while restricted to
 $H^*$. These are exactly the automorphic forms we shall compute.

 \subsection{Automorphic forms for $H$ and $H^*$}
 
  The following definition is standard:
   
  \begin{definition}
   Let $U$ be an open compact subgroup of $H(\AEf) = (B \otimes \AFf)^\times$, and $W$ a finite-dimensional $\CC$-linear representation of $H(E) = B^\times$. The space of automorphic forms for $H$ of weight $W$ and level $U$ is the space $M_W(H; U)$ of functions
   \[ f: (B \otimes \AFf)^\times \to W\]
   satisfying $f(\gamma g u) = \gamma \cdot f(g)$ for all $\gamma \in B^\times$ and $u \in U$.
  \end{definition}
  
  As is well known, $B^\times \backslash (B \otimes \AFf)^\times / U$ is finite. If $C_U$ denotes a set of representatives for this set, and for $x \in C_U$ we write $\Gamma_x = B^\times \cap x U x^{-1}$, then the map $f \mapsto (f(x))_{x \in C_U}$ gives an isomorphism
  \begin{equation}
   \label{eq:exp-description1}
   M_W(H; U) \cong \bigoplus_{x \in C_U} W^{\Gamma_x}.
  \end{equation}
  In particular, $M_W(H; U)$ is finite-dimensional.
  
  Similarly, if $U^*$ is an open compact subgroup of $H^*(\AFf)$, and $W$ a representation of $H^*(E)$, we can define a space $M_W(H^*; U^*)$ of automorphic forms for $H^*$ of weight $W$ and level $U^*$.
 
 \subsection{Pullback from $H$ to $H^*$}
   
  If $U$ is an open compact subgroup of $H(\AEf)$, and $U^*$ its intersection with $H^*$, then the inclusion $H^*(\AEf) \into  H(\AEf)$ gives a map
  \begin{equation}
   \label{eq:psi}
   \psi: H^*(E) \backslash H^*(\AEf) /U^*  \to H(E) \backslash H(\AEf)/U.
  \end{equation}
  
  \begin{definition}
   The map $\psi$ induces a pullback map
   $\psi^*:M_W(H; U) \to M_{W}(H^*; U^*)$
   given on $f \in M_W(H; U)$ by
   \[\psi^*(f)(x) \coloneqq f(\psi(x)).\]
  \end{definition}
  
  We shall now analyse this map more closely, under the following hypothesis: \emph{the image of $U$ under the reduced norm map $\nrd: H(\AEf) \to \AFf^\times$ is the maximal compact subgroup $\widehat\cO_F^\times$.} For instance, this is true if $U = \widehat\cO_B^\times$, or if $U$ is one of the subgroups $U_1(\fN)$ or $U_0(\fN)$ to be introduced below. In this case, all three maps
   \begin{align}
    \label{eq:surjns}
    H(\AEf) &\to \AEf^\times, & U & \to \widehat\cO_F^\times, & 
    H(E) &\to F^{\times +}
   \end{align}
   induced by the reduced norm are surjective. We thus obtain a surjection from $H(E) \backslash H(\AEf)/U$ to
  $F^{\times +} \backslash \AFf^\times / \widehat \cO_F^\times$, which is the
  narrow class group $\Cl^+(F)$; and this fits into a
  commutative diagram
 \[\begin{tikzcd}
  H^*(E) \backslash H^*(\AEf) /U^* \arrow{r}{\psi} \arrow{d}{\nrd} & 
  H(E) \backslash H(\AEf)/U \arrow{d}{\nrd} \\
  \Cl^+(E) \arrow{r} & \Cl^+(F)
  \end{tikzcd}
  \]
 where the vertical arrows are natural surjections. 
 
 \begin{lemma}
    The image of $\psi$ consists of those elements of
    $H(E) \backslash H(\AEf) / U$ whose reduced norm lies in the image of
    $\Cl^+(E)$ in $\Cl^+(F)$.
 \end{lemma}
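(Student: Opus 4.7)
The plan is to prove the two containments separately, with the non-trivial content concentrated in the surjectivity direction. Throughout, I will use only the surjectivity of the three reduced norm maps in \eqref{eq:surjns}, which was already established as a consequence of the hypothesis $\nrd(U) = \widehat{\cO}_F^\times$.

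For the containment ``$\subseteq$'': any element in the image of $\psi$ has a representative $h \in H^*(\AEf)$, whose reduced norm lies in $\AEf^\times$ by the very definition of $H^*$. Projecting $\nrd(h)$ to $\Cl^+(F)$ therefore factors through $\Cl^+(E)$, which is exactly the stated condition. This uses only the commutativity of the diagram above the lemma.

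For the containment ``$\supseteq$'': fix $g \in H(\AEf)$ whose class in $\Cl^+(F)$ lies in the image of $\Cl^+(E)$. By definition of this image, there exist $e \in \AEf^\times$, $\beta \in F^{\times +}$, and $v \in \widehat{\cO}_F^\times$ with $\nrd(g) = \beta\, e\, v$. Using the surjection $\nrd \colon H(E) \to F^{\times +}$, pick $\gamma \in H(E)$ with $\nrd(\gamma) = \beta$; using the surjection $\nrd \colon U \to \widehat{\cO}_F^\times$, pick $u \in U$ with $\nrd(u) = v$. Then $h \coloneqq \gamma^{-1} g u^{-1}$ satisfies $\nrd(h) = e \in \AEf^\times$, so $h \in H^*(\AEf)$. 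Since $h$ represents the same class as $g$ in $H(E)\backslash H(\AEf)/U$, this class lies in the image of $\psi$, as required.

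The only substantive step is the third, and its only potential obstacle is making the three adjustments simultaneously. But this is a non-issue here: the corrections by $\gamma \in H(E)$ and by $u \in U$ act independently on the left and right, and the surjectivity of the reduced norm on each of $H(E)$ and $U$ (itself a consequence of the standing hypothesis on $U$ together with strong approximation for $B$ and the assumption that $B$ is totally definite, so $F^{\times+}$ is already the whole of $\nrd(B^\times)$) allows $\beta$ and $v$ to be absorbed independently. Hence no compatibility condition between them is needed, and the argument closes.
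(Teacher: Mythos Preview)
Your proof is correct and follows essentially the same approach as the paper's: one containment from commutativity of the diagram, the other by using the surjectivity of the reduced norm maps in \eqref{eq:surjns} to adjust a representative into $H^*(\AEf)$. Your version is simply more explicit about decomposing $\nrd(g)$ as $\beta e v$ and absorbing $\beta$ and $v$ separately. One minor remark: the parenthetical in your final paragraph invoking strong approximation is unnecessary and slightly misplaced --- the surjectivity of $\nrd\colon B^\times \to F^{\times+}$ is the classical norm theorem (and is already asserted in \eqref{eq:surjns}), not a consequence of strong approximation --- but this does not affect the argument.
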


  \begin{proof} 
   It is clear from the commutativity of the diagram that the image of $\psi$ cannot be any larger than this. Conversely, let $x \in H(\AEf)$ be such that the class of $\nrd(x)$ is in the image of $\Cl^+(E)$. Since the maps \eqref{eq:surjns} are surjective, there exist $\gamma \in H(E)$ and $u \in U$ such that $\nrd(\gamma x u) \in \AEf^\times$. That is, $\gamma x u \in H^*(\AEf)$, and $\gamma x u$ lies in the same double coset as $x$. 
  \end{proof}
  
  We now study the fibres of $\psi$. We will need the following definition:
  
  \begin{definition}
   The \emph{capitulation group} is the group
   \[ K_{F / E} \coloneqq \frac{ F^{\times +} \cap  \left[ \widehat\cO_F^\times \cdot\AEf^\times\right] }{E^{\times +}}.\]
  \end{definition}
  
  Clearly, if $a \in F^{\times +}$ represents a class in the capitulation group, then the ideal $a \cO_F$ is the base-extension to $\cO_F$ of an ideal of $\cO_E$, whose narrow ideal class is independent of the representative $a$ and is in the kernel of the natural map $\Cl^+(E) \to \Cl^+(F)$ (the \emph{capitulation kernel}). This gives an exact sequence
  \[ 0 \to \frac{\cO_F^{\times +}}{\cO_E^{\times +}} \to K_{F / E} \to \Cl^+(E) \to \Cl^+(F).\]
  
  \begin{definition}
   We define an action of $K_{F/E}$ on
  $H^*(E) \backslash H^*(\AEf) /U^*$ as follows. Given
  $a \in F^{\times+}$ representing a class in $K_{F/E}$, there exists $\gamma \in H(E)$ such that $\nrd(\gamma) = a$, and $u \in U$ such that $a \nrd(u) \in \AEf^\times \subset \AFf^\times$. Then we define the action by
  \[ a \cdot [x] = [\gamma x u],\]
  which clearly is independent of the choice of $\gamma$ and $u$, and preserves the fibres of $\psi$.
  \end{definition}
 
  \begin{remark}
    If $a \in (\cO_F^\times)^2$ then the action is trivial, since for such $a$ we may choose $\gamma$ to be in $Z(B) \cap U$ and $u = \gamma^{-1}$. Thus the action of $K_{F/E}$ factors through the quotient of $K_{F/E}$ by the image of $(\cO_F^\times)^2$, which is a finite group.
  \end{remark}
  
  For $x \in H(\AEf)$, let $\Gamma_x$ denote the group $B^\times \cap x Ux^{-1}$, as above. Let
  $\cO_x = \{\nrd(\nu) \; : \; \nu \in \Gamma_x \} \subset \cO_F^{\times +}$.
   As $(\cO_F^\times)^2 \subset \cO_x$, the quotient $\cO_F^{\times +}/\cO_x$ is
   finite.
 
  \begin{theorem}
  \label{thm:fibers}
   Let $x \in H^*(\AEf)$. Then $K_{F/E}$ acts transitively on $\psi^{-1}(\psi(x))$, and the stabiliser of $x$ is $\cO_x$; i.e.\ the fiber at $\psi(x)$ is an homogeneous space for
   $K_{F/E}/\cO_x$.
  \end{theorem}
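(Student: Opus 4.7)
The plan is to verify both halves of the statement---transitivity of the $K_{F/E}$-action on each fibre of $\psi$, and the identification of the stabiliser---by translating every identity in the $H$-double quotient back to one in the $H^*$-double quotient via the reduced norm, using the standing hypothesis that $\nrd\colon H(E) \to F^{\times+}$ and $\nrd\colon U \to \widehat\cO_F^\times$ are both surjective. Beyond this, and the well-definedness of the action already noted, no further input is needed; the main obstacle is purely bookkeeping, namely keeping careful track of which classes live modulo $E^{\times+}$ and which modulo the larger group $\widehat\cO_F^\times \cdot \AEf^\times$ used to form $K_{F/E}$.

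For transitivity, I would start with $x, y \in H^*(\AEf)$ satisfying $\psi(x) = \psi(y)$, so that $y = \gamma x u$ for some $\gamma \in B^\times$ and $u \in U$, and compute reduced norms: $\nrd(y) = \nrd(\gamma)\nrd(x)\nrd(u)$. Since $\nrd(x), \nrd(y) \in \AEf^\times$ while $\nrd(u) \in \widehat\cO_F^\times$, the element $a \coloneqq \nrd(\gamma) \in F^{\times+}$ satisfies $a\nrd(u) \in \AEf^\times$, so $a \in \widehat\cO_F^\times \cdot \AEf^\times$ and represents a class in $K_{F/E}$. Feeding precisely this $\gamma$ and $u$ into the definition of the action then gives $a \cdot [x] = [\gamma x u] = [y]$, which establishes transitivity.

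For the stabiliser I will handle the two inclusions separately. First, if $a \in K_{F/E}$ fixes $[x]$, I pick representatives $\gamma \in H(E)$ with $\nrd(\gamma) = a$ and $u \in U$ with $a\nrd(u) \in \AEf^\times$; the fixing assumption then produces $\gamma' \in H^*(E)$ and $u' \in U^*$ with $\gamma x u = \gamma' x u'$, whence $(\gamma')^{-1}\gamma = x(u'u^{-1})x^{-1} \in B^\times \cap xUx^{-1} = \Gamma_x$. Taking reduced norms and using $\nrd(\gamma') \in E^{\times+}$ yields $a \in \cO_x \cdot E^{\times+}$, i.e.\ $a$ lies in the image of $\cO_x$ in $K_{F/E}$. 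Conversely, given $\nu \in \Gamma_x$ with $\nu = x u_0 x^{-1}$ for some $u_0 \in U$, I set $a \coloneqq \nrd(\nu) = \nrd(u_0) \in \cO_F^{\times+}$; the choices $\gamma \coloneqq \nu$ and $u \coloneqq u_0^{-1}$ satisfy $\nrd(\gamma) = a$ and $a\nrd(u) = 1 \in \AEf^\times$, and unwinding the definition gives $a \cdot [x] = [\nu x u_0^{-1}] = [x]$. Together these two directions identify the stabiliser of $[x]$ with the image of $\cO_x$ in $K_{F/E}$, so the fibre is a homogeneous space for $K_{F/E}/\cO_x$, as claimed.
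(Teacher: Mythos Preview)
Your proof is correct and follows essentially the same approach as the paper's: the transitivity argument is identical, and for the stabiliser you give the same two-sided inclusion via $(\gamma')^{-1}\gamma \in \Gamma_x$, the only cosmetic difference being that the paper first reduces the stabiliser to lie in $\cO_F^{\times+}/\cO_E^{\times+}$ before analysing it, whereas you handle both inclusions directly without that preliminary step.
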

  
  \begin{proof} Let $x,y \in H^*(\AEf)$ be such that
    $\psi([x]) = \psi([y])$.  Then there exists $\gamma \in H(E)$ and
    $u \in U$ such that $\gamma x u = y$, so $\nrd(\gamma) \in K_{F/E}$
    and $[y] = \nrd(\gamma) \cdot [x]$, proving that the action is
    transitive.
  
 Clearly the quotient $K_{F/E}/\cO_F^{\times +}$ permutes different fibers, so the stabilizer is contained in $\cO_F^{\times +}/\cO_E^{\times +}$. Let $f \in \cO_F^{\times +}$, choose $\gamma$ and $u$ depending on $f$ as above, and suppose that
    there exists $\tilde{\gamma} \in H^*(E)$ and $\tilde u \in U^*$ such
   that $\gamma x u = \tilde \gamma x \tilde u$. Taking norms,
    $\nrd \tilde \gamma \in \cO_F^{\times +} \cap
    E^\times=\cO_E^{\times +}$. The equality
  \[
  x(\tilde u u^{-1}) x^{-1} = \tilde \gamma^{-1} \gamma,
 \]
  implies that the element on the right belongs to $\Gamma_x$ and has norm
  equal to $\nrd(\gamma)$, up to $\cO_E^{\times +}$. If there is no
  such element, the orbits cannot be equivalent, while if such an element
  $\xi$ exists, $\tilde \gamma = \gamma \xi^{-1} \in H^*(E)$ and
  $\tilde u = x^{-1} \xi x u \in U^*$ gives the required equivalence.
 \end{proof}
 
 \begin{corollary}
\label{corollay:algorithm}
There exist an algorithm to compute the space $M_W(H^*; U^*)$.
\end{corollary}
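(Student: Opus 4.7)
The plan is to reduce the computation of $M_W(H^*; U^*)$ to that of $M_W(H; U)$, which is already handled by existing algorithms such as those of Demb\'el\'e--Voight \cite{DV}. The same double-coset argument that produces \eqref{eq:exp-description1} gives, for any set of representatives $C_{U^*}$ of $H^*(E)\backslash H^*(\AEf)/U^*$ and $\Gamma^*_y := H^*(E) \cap yU^*y^{-1}$, an isomorphism
\[ M_W(H^*; U^*) \cong \bigoplus_{y \in C_{U^*}} W^{\Gamma^*_y}. \]
So it suffices to (a) enumerate $C_{U^*}$ and (b) compute the groups $\Gamma^*_y$, after which $W^{\Gamma^*_y}$ is routine finite-dimensional linear algebra.

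For (a), I would first apply the existing algorithms to compute $C_U$ together with the finite subgroups $\Gamma_x \subset B^\times$ for each $x \in C_U$. The lemma preceding Theorem~\ref{thm:fibers} identifies the image of $\psi$ as the subset of classes whose reduced norms land in the image of $\Cl^+(E) \to \Cl^+(F)$, a condition checkable by standard class-group computations. For each such $x$, I would lift it to some $\tilde x \in H^*(\AEf)$ by constructively producing $\gamma \in B^\times$ and $u \in U$ with $\nrd(\gamma x u) \in \AEf^\times$. Theorem~\ref{thm:fibers} then says that $\psi^{-1}(\psi(\tilde x))$ is a torsor under $K_{F/E}/\cO_{\tilde x}$, which is a finite group since $(\cO_F^\times)^2 \subset \cO_{\tilde x}$. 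The subgroup $\cO_{\tilde x} = \nrd(\Gamma_{\tilde x})$ is read off from the already-computed $\Gamma_{\tilde x}$, while the action of a class $[a] \in K_{F/E}$ on $[\tilde x]$ is realised by the explicit formula $a \cdot [\tilde x] = [\gamma' \tilde x u']$ from the definition preceding Theorem~\ref{thm:fibers}. Enumerating these orbits over all $x$ in the image of $\psi$ produces $C_{U^*}$.

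For (b), one observes that $\nrd(B^\times) \subset F^{\times +}$ since $B$ is totally definite, so $\Gamma^*_y$ is obtained from the already-known $\Gamma_y$ by the single norm filtration $\nrd(\gamma) \in E^\times$. The genuine obstacle is the effective computation of the capitulation group $K_{F/E}$ itself: one needs explicit generators for the kernel of $\Cl^+(E) \to \Cl^+(F)$, for the intersection $F^{\times+} \cap (\widehat\cO_F^\times \cdot \AEf^\times)$ modulo $E^{\times+}$, and the solution of norm equations in $B^\times$. These tasks are all standard in computational number theory, but they are genuinely new ingredients not present in the paritious algorithms of \cite{DV}; implementing them cleanly, together with the resulting Hecke action on the fibres, is where the bulk of the work lies.
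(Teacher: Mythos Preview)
Your proposal is correct, and it rests on the same ingredients as the paper's proof: the fibre description of Theorem~\ref{thm:fibers}, the capitulation group $K_{F/E}$, and the existing algorithms of \cite{DV} for $H$. The emphasis, however, is genuinely different. The paper does not enumerate $C_{U^*}$ directly; instead it transports the $K_{F/E}$-action on the double-coset space to an action on $M_W(H^*;U^*)$ via $(a\cdot f)(x)=\gamma^{-1}f(\gamma x u)$, and then invokes Theorem~\ref{thm:fibers} to identify the image of the pullback $\psi^*$ with the $K_{F/E}$-invariant subspace. The algorithmic conclusion drawn there is that one can compute $\psi^*\!\left(M_W(H;U)\right)$ by adapting \cite{DV}. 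Your route, by contrast, builds $M_W(H^*;U^*)$ from scratch: lift each $x$ in the image of $\psi$ to $H^*(\AEf)$, run over a set of representatives for $K_{F/E}/\cO_x$ to list the fibre, and cut $\Gamma_y$ down to $\Gamma^*_y$ by the norm condition $\nrd(\gamma)\in E^\times$. This is more explicit about producing the \emph{whole} space $M_W(H^*;U^*)$ (matching the corollary as stated), whereas the paper's argument is really tailored to the pulled-back subspace, which is what is actually used later when transferring Hilbert eigenforms via Jacquet--Langlands. Both are valid; yours is closer to a direct implementation recipe, the paper's is closer to what one needs for the application.
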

\begin{proof}
 The action of $K_{E / F}$ on the above double quotients translates readily into an action on the space $M_W(H^*; U^*)$. For $a \in F^{\times +}$ representing a class in $K_{E/F}$, and $\gamma, u$ as before, and $f \in M_W(H^*; U^*)$, , we define 
  \[ (a \cdot f)(x) = \gamma^{-1} f(\gamma x u). \]
  From Theorem \ref{thm:fibers}, we see that the image of the pullback map $\psi^*$ consists of exactly those forms in $M_W(H^*; U^*)$ which are invariant under the action of $K_{F/E}$. Therefore, provided we have determined the image of $\Cl^+(E)$ inside $\Cl^+(F)$ and the capitulation group $K_{F/E}$, the algorithms described in \cite{DV} can be readily adapted to work with $\psi^*\left(M_W(H; U)\right)$.
\end{proof}

 \subsection{Weights}
  
  We now define the specific modules $W$ in which we are interested.
 
  \begin{definition}
   For $(\uk, \ut) $ a weight, with all $k_\sigma \ge 2$, we define the \emph{weight module} of weight $(\uk, \ut)$ to be the $\CC$-linear representation $W(\uk, \ut)$ of $B^\times$ given by
   \[ W(\uk, \ut) = \bigotimes_{\sigma \in \Sigma_F}\left( \operatorname{Sym}^{k_\sigma-2}(V_{\sigma})\otimes (\sigma \circ \nrd)^{2-k_\sigma-t_\sigma} \right).\]
  \end{definition}
  
  (The appearance of $\nrd^{2-k_\sigma-t_\sigma}$ is needed in order for our parametrisation of the weights to be consistent with automorphic forms for $\GL_2$ via the Jacquet--Langlands correspondence.) Here the action of $B^\times$ on the first factor is given by choosing splittings $B \otimes_{F, \sigma} \CC \cong M_{2\times 2}(\CC)$, for each $\sigma \in \Sigma_F$. This representation is, of course, not algebraic unless the $t_\sigma$ are all in $\ZZ$.
  
  \begin{notation}
   We write $M_{\uk, \ut}(H; U)$ for $M_{W(\uk, \ut)}(H; U)$ and similarly for $H^*$.
  \end{notation}
  
  The restriction map $\psi^*$ is clearly compatible with taking direct limits as $U$ shrinks. So we have a well defined map
  \[\psi^* : M_{\uk, \ut}(H) \to M_{\uk, \ut}(H^*),\] 
  where $M_{\uk, \ut}(H) \coloneqq \varinjlim_{U} M_{\uk, \ut}(H; U)$ and likewise for $H^*$.
  
  We now recall the precise statement of the Jacquet--Langlands correspondence. Let $S_{\uk, \ut}(H) = M_{\uk, \ut}(H)$ if $\uk \ne (2, \dots, 2)$, and if $\uk = (2, \dots, 2)$ let it be the quotient of $M_{\uk, \ut}(H)$ by its unique one-dimensional subrepresentation. 
  
  \begin{theorem}[Jacquet--Langlands]
   There is a bijection between the $H(\AEf)$-subrepresentations of $S_{\uk, \ut}(H)$, and the $\GL_2(\AFf)$-subrepresentations of the space $S_{\uk, \ut}$ of holomorphic Hilbert modular forms whose local factors at the primes dividing $\fd_B$ are discrete series; and this bijection preserves Satake parameters at the unramified primes.
  \end{theorem}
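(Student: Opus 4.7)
The plan is to deduce the statement from the classical global Jacquet--Langlands correspondence for $\GL_2$ and its inner forms, by translating both sides of the claimed bijection into assertions about automorphic representations and then matching local components.

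First, I would translate the $\GL_2$ side. Using the dictionary recalled in Section~\ref{sect:adelicHMF}, the space $S_{\uk,\ut}$ decomposes, as a $\GL_2(\AFf)$-representation, into a direct sum of the finite parts $\Pi^{\mathrm{fin}} = \bigotimes'_v \Pi_v$ of cuspidal automorphic representations $\Pi$ of $\GL_2(\AF)$ whose archimedean components $\Pi_\sigma$ are the (unique up to isomorphism) holomorphic discrete series of $\GL_2(F_\sigma) \cong \GL_2(\RR)$ of lowest weight $k_\sigma$ and central character $x\mapsto x^{k_\sigma+2t_\sigma-2}$; the parameters $\uk,\ut$ are precisely what is needed to pin down the archimedean component. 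Translating the $H$ side, since $B$ is totally definite the group $B^\times(F\otimes\RR)$ is compact, so $M_{\uk,\ut}(H)$ decomposes as a $H(\AEf)$-module into a direct sum of $\pi^{\mathrm{fin}}$ for automorphic representations $\pi$ of $B^\times(\AF)$ whose archimedean components are $W(\uk,\ut)^\vee$. The passage from $M_{\uk,\ut}$ to $S_{\uk,\ut}$ when $\uk = \underline{2}$ removes precisely those $\pi$ whose $\pi^{\mathrm{fin}}$ factors through the reduced norm, i.e.\ the one-dimensional automorphic representations, matching the standard definition of cuspidality on $B^\times$.

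Next I would invoke the global Jacquet--Langlands theorem (\cite[Ch.~16]{jacquetlanglands} in the original form, extended to general number fields and inner forms by Gelbart--Jacquet and Badulescu): it gives a bijection $\Pi \leftrightarrow \pi$ between cuspidal automorphic representations of $\GL_2(\AF)$ whose local factor at every $v\mid\fd_B$ is in the discrete series, and cuspidal automorphic representations of $B^\times(\AF)$, preserving the local factor at every $v\nmid\fd_B$ and matching the local JL correspondent at $v\mid\fd_B$. In particular, Satake parameters at unramified primes are preserved, which is exactly the compatibility asserted in the statement.

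The main step that requires care is the matching at the archimedean places. For each real place $\sigma$, one must check that the local JL correspondent of the holomorphic discrete series of $\GL_2(\RR)$ of weight $(k_\sigma,t_\sigma)$ is the finite-dimensional representation $\Sym^{k_\sigma-2}(V_\sigma)\otimes(\sigma\circ\nrd)^{2-k_\sigma-t_\sigma}$ of the compact group $B^\times(F_\sigma)$ used in the definition of $W(\uk,\ut)$. This is a standard computation of infinitesimal character and central character: the symmetric power contributes the correct $L$-parameter on the $\SL_2$ factor, while the twist by the power of $\nrd$ fixes the central character so that the two representations share the same $L$-parameter. Once this identification is in hand, the bijection between irreducible $H(\AEf)$-subrepresentations of $S_{\uk,\ut}(H)$ and the prescribed subrepresentations of $S_{\uk,\ut}$ follows by reading off the finite parts, and the preservation of Satake parameters at $v\nmid\fd_B$ is immediate from the corresponding property of the global Jacquet--Langlands bijection.
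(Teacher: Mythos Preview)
The paper does not actually prove this theorem: it is stated with the attribution ``Jacquet--Langlands'' and used as a black box, with no argument given. Your proposal is a correct and standard outline of how the statement is deduced from the classical global Jacquet--Langlands correspondence, so in that sense you have supplied more than the paper does.

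A couple of small remarks on your sketch. First, the weight module $W(\uk,\ut)$ is only defined for $k_\sigma\ge 2$, so the archimedean matching you describe (holomorphic discrete series on $\GL_2(\RR)$ versus $\Sym^{k_\sigma-2}$ twisted by a power of $\nrd$ on the compact inner form) is exactly the relevant case; there is no need to worry about limits of discrete series. Second, your references to the original Jacquet--Langlands book and to Gelbart--Jacquet/Badulescu are not in the paper's bibliography, so if this were to be inserted you would need to add those items or replace the citations by prose. Otherwise the argument is sound and is the expected one.
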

  
  Let $\Pi_{H^*}$ be an automorphic representation of $H^*$ of weight $(\uk, \ut)$ which arises from $\psi^*(S_{\uk, \ut}(H))$. Then $\Pi_{H^*}$ is a constituent of some automorphic representation $\Pi_{H}$ of $H$, which is the Jacquet--Langlands correspondent of an automorphic representation $\Pi_G$ of $G$ arising in $S_{\uk, \ut}$. If $\Pi_{G^*}$ is any $G^*$-constituent of $\Pi_G$, then the Satake parameters of $\Pi_{G^*}$ at unramified primes are the same as those of $\Pi_{H^*}$; and we can compute these using the action of Hecke operators on $M_{\uk, \ut}(H^*)$. This gives an explicit approach to computing with automorphic representations arising from (possibly non-paritious) Hilbert modular forms.
    
  \subsection{Induction and Shapiro's lemma}
  
  We shall also need to consider some more general modules
  incorporating some finite-order character. Let $\fN$ be an ideal of $\cO_F$ coprime to $\fd_B$. For each $\fq \mid \fN$ we fix an isomorphism
  \[ \cO_{B, \fq}^\times =(\cO_B \otimes_{\cO_F} \cO_{F, \fq})^\times \cong \GL_2(\cO_{F, \fq}),\]
  so that we can define the subgroups
  $U_0(\fN) = \{ u \in \widehat{\cO}_{B}^\times : u = \stbt * * 0 * \bmod
  \fN\}$
  and
  $U_1(\fN) =\{ u \in \widehat{\cO}_{B}^\times : u = \stbt * * 0 1 \bmod
  \fN\}$.
  Clearly $U_1(\fN) \trianglelefteq U_0(\fN)$, and the quotient is isomorphic to $(\cO_F / \fN)^\times$.
  
  \begin{definition}
   Let $\varepsilon$ be a character of $(\cO_F / \fN)^\times$. The weight module for $(\fN,\uk,\ut, \varepsilon)$ is the $\CC$-linear representation of $B^\times \cap \prod_{\fq \mid \fN} \widehat{\cO}_{B, \fq}^\times$ given by
   \[
   V(\fN,\uk,\ut, \varepsilon) \coloneqq W(\uk, \ut) \otimes \CC[\mathbf{P}^1(\cO_F/\fN)],
   \]
   where the action on $\CC[ \mathbf{P}^1(\cO_F/\fN)] = \CC[ \widehat{\cO}_{B}^\times / U_0(\fN)]$ is given by induction from the character $\varepsilon: U_0(\fN) /U_1(\fN) \to \CC^\times$.
  \end{definition}
   
  The module $V(\fN,\uk,\ut, \varepsilon)$ is not a representation of $B^\times$, but only of the subgroup consisting of elements that are units locally at the primes dividing $\fN$. However, by weak approximation, an automorphic form for $H$ or $H^*$ (of any level) is uniquely determined by its values on elements of $H(\AEf)$ or $H^*(\AEf)$ that are units at $\fN$. Thus we may make the following definition:
   
  \begin{definition}
   We define the space of quaternionic Hilbert modular forms of weight $(\uk, \ut)$, level $\fN$ and character $\eps$ by 
   \[ M_{\uk, \ut}(\fN, \varepsilon) \coloneqq M_{V(\uk, \ut, \fN, \eps)}(H, \widehat \cO_B^\times).\]
   We define similarly a space $M_{\uk, \ut}^*(\fN, \eps)$ of automorphic forms on $H^*$.
  \end{definition}
  
  From Shapiro's lemma, one sees readily that there is an isomorphism between $M_{\uk, \ut}(\fN, \varepsilon)$ and the subspace of $M_{W(\uk, \ut)}(H; U_1(\fN))$ where the quotient $U_0(\fN) / U_1(\fN)$ acts via the character $\varepsilon$. However, the former interpretation is more convenient for computations, since for $U = \widehat\cO_B^\times$ the double cosets $C_U$ have an interpretation as equivalence classes of right $\cO_B$-ideals in $B$, and there are robust algorithms available for computing with them, as explained in \cite{DV}. 
     
  \begin{lemma}
   \label{lemma:char}
   The group $\cO_F^\times \subseteq \cO_B^\times$ acts via a character on $V(\fN,\uk,\ut, \varepsilon)$, and this character is trivial if and only if $(\uk, \ut)$ is reasonable and $\eps(u) = \prod_{\sigma} \operatorname{sign} \sigma(u)^{k_\sigma}$ for all $u \in \cO_F^\times$.\qed
  \end{lemma}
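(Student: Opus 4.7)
The plan is to compute the character by which $\cO_F^\times$ acts on each tensor factor of $V(\fN,\uk,\ut,\varepsilon)$ and then read off both claims from a short analysis.

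First, I would observe that $\cO_F^\times$ sits in the centre of both $B^\times$ and $\widehat{\cO}_B^\times$, so every $u \in \cO_F^\times$ must act as a scalar on each of the tensor factors appearing in the definition of $V(\fN,\uk,\ut,\varepsilon)$. This immediately forces the action to be through a character $\chi: \cO_F^\times \to \CC^\times$, and reduces the statement to an explicit factor-by-factor calculation. Under the splitting $B \otimes_{F,\sigma} \CC \cong M_2(\CC)$, the element $u$ acts on $V_\sigma$ by the scalar $\sigma(u) \in \RR^\times$, hence on $\operatorname{Sym}^{k_\sigma-2}(V_\sigma)$ by $\sigma(u)^{k_\sigma-2} = \operatorname{sign}(\sigma(u))^{k_\sigma} \cdot |\sigma(u)|^{k_\sigma-2}$. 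Since $B$ is totally definite, $\nrd(u) = u^2$ is totally positive, so the twist $(\sigma \circ \nrd)^{2-k_\sigma-t_\sigma}$ contributes $(\sigma(u)^2)^{2-k_\sigma-t_\sigma} = |\sigma(u)|^{4-2k_\sigma-2t_\sigma}$ with no sign ambiguity. Finally, $u$ is central in $\widehat\cO_B^\times$ and lies in $U_0(\fN)$ with image $u \bmod \fN$ under $U_0(\fN)/U_1(\fN) \cong (\cO_F/\fN)^\times$, so it acts on $\CC[\mathbf{P}^1(\cO_F/\fN)] \cong \Ind_{U_0(\fN)}^{\widehat\cO_B^\times}\varepsilon$ by the scalar $\varepsilon(u \bmod \fN)$ (standard fact: a central element of the inducing subgroup acts by its character value on the induced representation). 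Multiplying the three contributions gives
\[
 \chi(u) = \varepsilon(u\bmod\fN) \cdot \prod_{\sigma \in \Sigma_F} \operatorname{sign}(\sigma(u))^{k_\sigma} \cdot \prod_{\sigma \in \Sigma_F} |\sigma(u)|^{2-k_\sigma-2t_\sigma}.
\]

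Second, I would determine when $\chi \equiv 1$ by separating archimedean signs from absolute values. The absolute value factor equals $\exp\left(\sum_\sigma (2-k_\sigma-2t_\sigma)\log|\sigma(u)|\right)$; by Dirichlet's unit theorem the vectors $(\log|\sigma(u)|)_{\sigma \in \Sigma_F}$ for $u \in \cO_F^\times$ span the full hyperplane $\sum_\sigma x_\sigma = 0$ in $\RR^{\Sigma_F}$. Consequently this factor vanishes identically on $\cO_F^\times$ if and only if the exponents $2-k_\sigma-2t_\sigma$ are independent of $\sigma$, which is exactly the reasonableness of $(\uk,\ut)$. Assuming this, the remaining $\{\pm 1\}$-valued character $u \mapsto \varepsilon(u\bmod\fN) \prod_\sigma \operatorname{sign}(\sigma(u))^{k_\sigma}$ is trivial iff $\varepsilon(u) = \prod_\sigma \operatorname{sign}(\sigma(u))^{k_\sigma}$ for all $u \in \cO_F^\times$, which is the stated criterion; the torsion element $u = -1$ is covered by the same formula and contributes no extra condition.

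The only non-routine ingredient is the appeal to Dirichlet's unit theorem, which is what isolates the reasonable condition from the absolute value factor; everything else is bookkeeping with the definition of $W(\uk,\ut)$ together with the centrality of $\cO_F^\times$.
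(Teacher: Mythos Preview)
Your proof is correct. The paper itself gives no argument at all for this lemma (it is stated with a bare \qed), so your write-up supplies exactly the routine verification the authors leave to the reader: compute the scalar by which a central $u \in \cO_F^\times$ acts on each tensor factor, then separate the resulting character into its absolute-value part (controlled by Dirichlet's unit theorem, yielding the ``reasonable'' condition) and its root-of-unity part (yielding the condition on $\varepsilon$).

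One small inaccuracy worth fixing: you describe the residual character $u \mapsto \varepsilon(u)\prod_\sigma \operatorname{sign}\sigma(u)^{k_\sigma}$ as ``$\{\pm 1\}$-valued'', but $\varepsilon$ is an arbitrary character of $(\cO_F/\fN)^\times$ and need not take values in $\{\pm 1\}$ on $\cO_F^\times$. This does not affect the argument --- the point is simply that this factor is a root of unity while the other factor is a positive real, so $\chi(u)=1$ forces each to equal $1$ separately --- but the phrasing should be adjusted.
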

  
  \begin{remark}
   The conditions of the lemma are equivalent to $\varepsilon$ being the finite part of a Hecke character of conductor $\fN$, whose signs at the infinite places are determined by the $k_\sigma$. 
   
   For $U = \widehat\cO_B^\times$, each of the groups $\Gamma_x$ appearing in \eqref{eq:exp-description1} will contain $\cO_F^\times$ as a finite-index subgroup; so $M_{\uk, \ut}(\fN, \varepsilon)$ is zero unless the conditions of Lemma \ref{lemma:char} are satisfied. If these conditions do hold, then $M_{\uk, \ut}(\fN, \varepsilon)$ can be decomposed into a direct sum of eigenspaces for the action of $Z(H)(\AEf)$, corresponding to the set of Gr\"ossencharacters of $F$ extending $\eps$.
  \end{remark}
  
 \subsection{Hecke operators}
  \label{sect:hecke}
  
  Let $\fm$ be an ideal of $\cO_F$ coprime to $\fN \fd_B$. On the space $M_{\uk, \ut}(\fN, \eps)$, we have the following Hecke operators:
  \begin{itemize}
   \item The operator $\cT(\fm)$, given by the double $U$-coset of elements of $\widehat\cO_B$ whose norms generate the ideal $\fm\widehat\cO_F$;
   \item the operator $\cS(\fm)$, given by the double $U$-coset generated by the element $x \in Z(H)(\AEf)$, for any $x \in \widehat\cO_F$ generating the ideal $\fm\widehat\cO_F$.
  \end{itemize}
  They satisfy the familiar multiplicative relations: if $\fm$ and $\fm'$ are coprime, then $\cT(\fm\fm') = \cT(\fm) \cT(\fm')$, and if $\fp$ is prime, then $\cT(\fp)^2 = \cT(\fp^2) + q \cS(\fp)$, where $q = \Nm(\fp)$. If $\fm$ is narrowly principal, generated by some $x \in F^{\times+}$, then $\cS(\fm) = \Nm(x)^{2-R} \eps(x)$.
  
  For $M^*_{\uk, \ut}(\fN, \eps)$, the action of Hecke operators is more restricted. We obtain Hecke operators $\cT(\fm)$ and $\cS(\fm)$ for any ideal $\fm$ of $\cO_E$ (rather than $\cO_F$) coprime to $\fN \fd$, and these are compatible with the corresponding operators for $H$ via the map $\psi$. More generally, we can descend to $H^*$ those Hecke operators for $H$ corresponding to double cosets with a natural choice of representative lying in $H^*$. For instance, if $\fp$ is a prime of $F$, then the operator $\cS(\fp)^{-1} \cT(\fp^2)$ is well-defined as a Hecke operator for $H^*$, although $\cS(\fp)$ and $\cT(\fp^2)$ themselves are not, since in the spherical Hecke algebra of $\GL_2(F_\fq)$ we have
  \[ \cS(\fp)^{-1} \cT(\fp^2) = [1] + \left[ \stbt{\varpi^{-1}} 0 0 \varpi\right] \]
  for $\varpi$ a uniformizer at $\fq$, and the double-coset representatives on the left are in $\SL_2(F_{\fq})$ and thus a fortiori in $H^*(\AEf)$.
    
  Although we have fewer Hecke operators to consider when working with $H^*$, we have potentially gained an algebraicity property. If $\uk$ is not $F$-paritious, but is $E$-paritious, then we can choose $\ut$ such that $(\uk, \ut)$ is reasonable and $W$ is algebraic as a representation of $H^*$ (although we cannot, of course, make it algebraic as a representation of $H$). In this case, we can find a finite extension $L / \QQ$ to which $V(\fN, \uk, \ut, \eps)$ descends, and hence $M^*_{\uk, \ut}(\fN, \eps)$ is the base-extension to $\CC$ of an $L$-vector space which is preserved by the action of the Hecke operators for $H^*$.
  
  \begin{remark}
   \label{rmk:shimura}
   We can re-introduce some of the ``missing'' Hecke action using a trick due to Shimura (cf.\ \cite[Definition 2.2.4]{LLZ}). Let $\mathscr{H}$ denote the subgroup of $(B \otimes \AFf)^\times$ consisting of the elements whose reduced norms are in $F^{\times +} \cdot \AEf^\times \subset \AFf^\times$. Then the double quotient $H(E) \backslash \mathscr{H} / U^*$ bijects with $H^*(E) \backslash H^*(\AEf) / U^*$, so we can interpret $M_{\uk, \ut}^*(\fN, \eps)$ as a space of functions on $\mathscr{H} / U^*$. Thus we may define a Hecke operator for any double $U^*$-coset in $\mathscr{H}$. In particular, we can use this to make sense of $\cT(\fp)$ as an operator on $M^*_{\uk, \ut}(\fN, \eps)$ for any prime $\fp \nmid \fN\fd_B$ of $F$ whose ideal class lies in the image of $\Cl^+(E)$ in $\Cl^+(F)$; however, this will only be well-defined modulo the action of the capitulation group $K_{E/F}$.
   
   Note that the Hecke operators associated to double cosets in $\mathscr{H}$ make sense even if $(\uk, \ut)$ is not ``reasonable'' in the sense of \S \ref{sect:adelicHMF}, since we only need $\cO_E^\times$ to act trivially, not $\cO_F^\times$. We shall see an application of this in the next section.
  \end{remark}
   
\section{An explicit example of a non-paritious Hilbert eigenform}

 \subsection{Setup}
 
 Let $F = \QQ(\sqrt{2})$, and let $\sigma_1, \sigma_2$ denote the two
 embeddings $F \into \RR$ (mapping $\sqrt{2}$ to $\sqrt{2}$ and
 $-\sqrt{2}$ respectively). Let $B = \left( \frac{-1, -1}{F} \right)$ be the Hamilton quaternions over $F$, so that $B$ is the unique quaternion
 algebra over $F$ unramified at all finite places; and let $\cO_B$ be a maximal order
 in $B$, so that $\widehat\cO_B^\times$ is a maximal compact subgroup
 of $H(\AFf)$. The class number of $\cO_B$ is one.
 
 There is a 6-dimensional $\CC$ representation of the group
 $H = \Res_{F / \QQ} B^\times$ corresponding to $\uk = (4, 3)$ and
 $\ut = (-\tfrac{7}{4}, -\tfrac{5}{4})$, given by
 \[ 
  W = \Sym^2 V_{\sigma_1} \otimes \Sym^1 V_{\sigma_2} \otimes (\sigma_1 \circ \nrd)^{-1/4} \otimes (\sigma_2 \circ \nrd)^{1/4}, 
 \]
 where $V_{\sigma_i}$ is the 2-dimensional representation of $H$
 coming from a splitting of $B \otimes_{F, \sigma_i} \CC$. This
 representation is, of course, not algebraic, but its restriction to $H^*$ is algebraic and can be descended
 to any finite extension $K / F$ over which $B$ splits, such as the
 cyclotomic field $\QQ(\zeta_8)$.
 
 The central character of $W$ is the character of $Z(B^\times) = F^\times$ given by
 \[ z \mapsto \sigma_1(z)^2 \cdot \sigma_2(z) \cdot |\sigma_1(z)^2|^{-1/4} \cdot |\sigma_2(z)^2|^{1/4} = |\Nm_{F / \QQ} z|^{3/2} \operatorname{sign} \sigma_2(z).\]
 
 In order to obtain non-zero Hilbert modular forms, we need to take a non-trivial character. Let $\fN$ be the ideal generated by $5 - 3\sqrt{2}$ (so $\fN$ is one of the two prime ideals above $7$). There is a unique non-trivial quadratic character $\eps: (\cO_F /\fN)^\times \to \pm 1$, and one checks that for $u \in \cO_F$ we have $\eps(u) = \operatorname{sign} \sigma_2(u)$, where $\sigma_2$ is the embedding $F \into \RR$ mapping $\sqrt{2}$ to $-\sqrt{2}$; in particular, the restriction of $\eps$ to $\cO_F^\times$ is the inverse of the central character of $V$, a necessary condition for Hilbert modular forms of weight $V$ and character $\eps$ to exist. 
 
 With this choice we compute that the space $M_{\uk, \ut}(\fN, \eps)$ is 2-dimensional. Since $F$ has narrow class number one, and $\cO_F^{\times +} = (\cO_F^\times)^2$, this is isomorphic (via the pullback map $\psi$) to the space $M^*_{\uk, \ut}(\fN, \eps)$. 
 \subsection{Hecke operators}
  
  If $\mathfrak{m}$ is an ideal of $F$ coprime to $\fn$, then we have two related definitions of a Hecke operator at $\mathfrak{m}$:
  \begin{itemize}
   \item A \emph{normalized Hecke operator} $\mathcal{T}(\mathfrak{m})$, defined as in \S \ref{sect:hecke} above.
   \item A \emph{naive Hecke operator} $T(\varpi)$, depending on a choice of totally-positive generator $\varpi$ of $\mathfrak{m}$. This is given by identifying $W$ as an $H^*$-representation with the representation $W(\uk, \ut') = \Sym^2 V_{\sigma_1} \otimes \Sym^1 V_{\sigma_2}$, where $\ut' = 2 - \uk = (-2, -1)$; and treating $T(\varpi)$ as a double coset in the group $\mathscr{H}$ of Remark \ref{rmk:shimura}. 
  \end{itemize} 
  The normalisation of the ``naive Hecke operator'' is chosen in such a way that its eigenvalue corresponds to the ``naive Hecke eigenvalue'' defined above in the complex-analytic theory. The two operators are related by the formula 
  \begin{equation}
   \label{eq:hecke}
    \mathcal{T}(\mathfrak{m}) = \left(\frac{\sigma_2(\varpi)}{\sigma_1(\varpi)}\right)^{\tfrac{1}{4}} T(\varpi).
   \end{equation}
  In particular, if $\fm$ is the base-extension to $F$ of an ideal of $\ZZ$, and $\varpi$ is the positive integer generating $\fm$, then $\cT(\fm)$ and $T(\varpi)$ agree.
  
  The normalised Hecke operator $\mathcal{T}(\mathfrak{m})$ is canonically defined, but it does not preserve the natural $K$-structure on the space, so the collection of eigenvalues of these operators (for varying $\mathfrak{m}$) do not all lie in a finite extension of $\QQ$. On the other hand, the naive Hecke operator $T(\varpi)$ preserves the $K$-structure, but it will depend on the the choice of generator $\varpi$. 
  
  From equation \eqref{eq:hecke}, it is clear that if $p$ is a prime inert in $F$ and $\mathfrak{m} = (p)$, then $\mathcal{T}(\mathfrak{m}) = T(p)$; whereas if $p = \fp_1 \fp_2$ is a prime split or ramified in $F$, and $\varpi_1, \varpi_2$ are totally positive generators of these ideals such that $\varpi_1 \varpi_2 = p$, then $\mathcal{T}(\fp_1) \mathcal{T}(\fp_2) = T(\varpi_1) T(\varpi_2) = T(p)$. So in either case we \emph{do} have a canonical operator $T(p)$, which is both independent of choices and has eigenvalues defined over a finite extension, which is the Hecke operator of $H^*$ and can be computed with either definition. 

  Similarly we can define a normalized operator $\mathcal{S}(\mathfrak{m})$ for any ideal $\mathfrak{m}$, and a naive operator $S(\varpi)$ for $\varpi \in \cO_F$, via the action of $\stbt \varpi 0 0 \varpi$. Note that if $p$ is a split prime and $\varpi_1 \varpi_2 = p$, the operators $T(\varpi_1^2)S(\varpi_2)$ and $T(\varpi_2^2)S(\varpi_1)$ are well defined and are independent of the choice of generators with either (but consistent) definition. Clearly the action of $S(p)$ is given by $p^3\varepsilon(p)$. 
  
 \subsection{Hecke eigenvalues} 
 
  Our space $M_{\uk,\ut}(\fN,\varepsilon)$ is an irreducible module for the Hecke algebra with coefficients in $F$; it decomposes over the CM field $L = F[b]$, where $b^2 = -3\sqrt{2}-8$. (We note that $L$ is not Galois over $\mathbf{Q}$.) 
  
  In Table \ref{table:lev7-sqrt2}, we display the Hecke eigenvalues for all primes of $F$ of norm up to 200. For an inert prime $p$, we list the eigenvalue $t(p)$ of the Hecke operator $T(p) = \cT(p)$. For a split prime, we choose arbitrary totally-positive generators $\varpi_1$ and $\varpi_2$ of the two primes above $p$ such that $\varpi_1 \varpi_2 = p$, and we list the eigenvalues $t(\varpi_i)$ of the naive Hecke operators $T(\varpi_1)$ and $T(\varpi_2)$.
  
  The eigenvalues displayed show many of the interesting features we expect for such an eigensystem. For example, we see that the eigenvalue $t(\varpi)$ lies in $F$ when $\varepsilon(\varpi) = 1$, and in $b\cdot F$ when $\varepsilon(\varpi) = -1$. In particular, when $p$ is totally split in $\mathbf{Q}(\sqrt{2}, \sqrt{-7})$, such as $p = 23$, then we see that $t(\varpi_1)$ and $t(\varpi_2)$ are both in $F$.

  The smallest rational prime which is inert in $F$ is $p = 3$. In that case, we have $\varepsilon(3) = -1$, and $t(3) =  (7\sqrt{2} - 4)b$. 
 
  The smallest rational prime which splits in $F$ is $p = 17$: we have $17 = \varpi_1 \varpi_2$ where $\varpi_1 = 2\sqrt{2} + 5$. Note that $\eps(\varpi_1) = -1$, but $\eps(\varpi_2) = +1$, so $t(\varpi_2)$ is in $F$ but $t(\varpi_1)$ is not, and nor is the product $t(p) = t(\varpi_1) t(\varpi_2) =  (150\sqrt{2} + 264)b$ is not in $F$. 
  
  If $\fp_1 = (\varpi_1)$ then equation \eqref{eq:hecke} tells us that the normalised Hecke operator $\cT(\fp)$-eigenvalue acts as $(3\sqrt{2} + 12)b \cdot \left( \frac{ 5 - 2\sqrt{2}}{5 + 2\sqrt{2}} \right)^{1/4}$. 
  Any other totally positive generator of $\fp$ is of the form $\varpi'= \varpi u^{2k}$, where $u = 1 + \sqrt{2}$ is the fundamental unit. For such a generator,
  we see that $T(\varpi') = (3\sqrt{2} + 12) u^k b$, and one readily verifies that
  \[ 
   (3\sqrt{2} + 12)u^k \cdot  \left( \frac{ (5 - 2\sqrt{2})u^{-2k}}{(5 + 2\sqrt{2})u^{2k}} \right)^{\frac{1}{4}} 
   =(3\sqrt{2} + 12) \cdot \left( \frac{ 5 - 2\sqrt{2}}{5 + 2\sqrt{2}} \right)^{\frac{1}{4}}. 
  \]
  So, indeed, the eigenvalue for the normalised Hecke operator $\mathcal{T}(\fp)$ is independent of the choice of totally positive generator of $\fp$.
 
 \begin{table}
\caption{Naive Hecke eigenvalues at level $(5-3\sqrt{2})$ and weight $(4,3)$ over $\QQ(\sqrt{2})$, for primes of norm $< 200$. Here $w = \sqrt{2}$ and $b^2 = -3\sqrt{2} - 8$.}
\label{table:lev7-sqrt2}\small
\begin{tabular}{ >{$}c<{$}   >{$}r<{$}   >{$}r<{$}  >{$}r<{$} }
\toprule
\Nm(p) & \varpi_1 & t(\varpi_1) &  t(\varpi_2)\\\midrule
9 & 3 & (7w - 4)b \\ 
17 & 2w + 5 & (3w + 12)b &  -8w - 18 \\ 
23 & w + 5 & -22w + 14 &  26w + 36 \\ 
25 & 5 & (-16w + 18)b \\ 
31 & 3w + 7 & (13w - 18)b  & -30w + 34 \\ 
41 & 2w + 7 & -16w - 106  & (-32w + 26)b \\ 
47 & w + 7 & -76w + 46 & (7w - 70)b \\ 
71 & 5w + 11 & (-74w - 6)b  & (3w - 32)b \\ 
73 & 2w + 9 & (-27w + 18)b  & 168w + 14 \\ 
79 & w + 9 & (-46w + 60)b  & (7w + 40)b \\ 
89 & 4w + 11 & (65w + 64)b  & -206w + 30 \\ 
97 & 6w + 13 & 272w + 38  & (83w - 32)b \\ 
103 & 3w + 11 & 78w + 228  & (-8w + 122)b \\ 
113 & 2w + 11 & (46w - 56)b  & (-18w + 8)b \\ 
121 & 11 & 170w + 366 \\ 
127 & 9w + 17 & -50w + 46  & -272w + 372 \\ 
137 & 14w + 23 & -10 &  -74w + 114 \\ 
151 & 3w + 13 & -282w - 168 & 172w - 318 \\ 
167 & w + 13 & (172w - 166)b & -398w - 24 \\ 
169 & 13 & (-84w + 62)b \\ 
191 & 7w + 17 & (11w + 12)b &  (-114w + 184)b \\ 
193 & 4w + 15 & (129w + 162)b  & (185w - 486)b \\ 
199 & 11w + 21 & -250w - 188  & (-288w + 430)b \\ %
\bottomrule
\end{tabular}
\end{table}

 \subsection{Satake parameters}
 
 Let $\Pi = \Pi_0 \otimes \|\nrd\|^{-1/2}$, where $\Pi_0$ is the automorphic representation of $H$ arising from the system of eigenvalues described above (and tabulated in Table \ref{table:lev7-sqrt2}). The shift by $\|\nrd\|^{-1/2}$ is included in order to give a slightly more pleasant normalisation of the Satake parameters.
  
 If $s_{\fp}$ denotes the Satake parameter of $\Pi$ at a finite prime $\fp$, then $s_\fp$ is the conjugacy class of matrices with characteristic polynomial
 \[ \mathcal{H}_{\fp}(X) = X^2 - \tau(\fp) X + \Nm(p)^{5/2} \eps(\fp), \]
 where $\tau(\fp)$ denotes the $\mathcal{T}(\fp)$-eigenvalue. On the other hand, we may consider the ``naive Satake parameter''
 \[ s_{\varpi} = \left(\frac {\sigma_1(\varpi)}{\sigma_2(\varpi)}\right)^{1/4} s_{\fp}, \]
 where $\varpi$ is a choice of totally-positive generator of $\fp$. Then the characteristic polynomial of $s_{\varpi}$ is the polynomial
 \[ H_\varpi(X) = X^2 - t(\varpi) X + \sigma_1(\varpi)^{3} \sigma_2(\varpi)^2 \eps(\fp) \]
 where as above $t(\varpi)$ is the eigenvalue of $T(\varpi)$; and these polynomials all have coefficients in the finite extension $L = F[b]$.

If $p = \fp_1 \fp_2$ is a rational prime split in $F$, and $\varpi_1, \varpi_2$ are positive generators of the $\fp_i$ chosen so that $\varpi_1 \varpi_2 = p$, then the images of the pairs $(s_{\fp_1}, s_{\fp_2})$ and $(s_{\varpi_1}, s_{\varpi_2})$ in the quotient
\[ \frac{(\GL_2(\CC) \times \GL_2(\CC)) \rtimes \Gal(F / \QQ)}{\{ (z,
    z^{-1}): z \in \CC^\times\}} \cong \GO_4(\CC) \] are
the same. The common image of these elements gives the conjugacy class of $r^*_{\Pi, \iota}(\Frob_p)$. Using this description one can easily compute the characteristic polynomial of $r^*_{\Pi, \iota}(\Frob_p)$ in the standard representation of $\GO_4$: if $p$ is split, it is given by\footnote{Note that for the $X^2$ coefficient we need to compute the Hecke operators $T(p)^2$ and $T(p^2)$; these can be calculated directly as double cosets, but it is quicker computationally to express these operators as polynomials in $T(\varpi_1)$ and $T(\varpi_2)$, since evaluating these non-normalised operators involves summing over fewer double coset representatives.}
\[
H_p(X)=X^4-t(p)X^3+\left(t(p)^2-t(p^2) - p^5 \varepsilon(p)\right)X^2-p^5t(p)\varepsilon(p)X+p^{10}\eps(p)^2.
\]
Similarly, if $p$ is inert in $F$ it is given by
\begin{align*}
H_p(X)
&= X^4 - t(p) X^3 + p^5 t(p) \eps(p) X - p^{10} \eps(p)^{2}.
\end{align*}
The coefficients of these characteristic polynomials for the three smallest primes of each type are given in Table \ref{table:lev7-sqrt2v3}. 

\begin{table}
\caption{Characteristic polynomials of $\Frob_p$ in the standard representation of $\GO_4$ (notations as in Table \ref{table:lev7-sqrt2}).}
\label{table:lev7-sqrt2v3}
\small
\begin{tabular}{ >{$}c<{$}  >{$}l<{$} }
\toprule
p & \multicolumn{1}{c}{$H_p(X)$}\\
\midrule
3 & X^4 + (-7w + 4)bX^3 + (-1701w + 972)bX -3^{10} \\[0.5em]
5 & X^4 + (16w - 18)bX^3 + (50000w - 56250)bX -5^{10} \\ [0.5em]
11 & X^4 + (-170w - 366)X^3 + (27378670w + 58944666)X -11^{10} \\
\midrule
\multirow{2}{*}{$17$} & X^4 + (150w + 264)bX^3 + (-1213222w + 584358)X^2\\
&\hfill  + (-212978550w - 374842248)bX + 17^{10}\\[0.5em]
\multirow{2}{*}{$23$} & X^4 + (428w + 640)X^3 + (4107156w - 157642)X^2\\
&\hfill + (2754754804w +  4119259520)X + 23^{10}\\[0.5em]
\multirow{2}{*}{31} & X^4 + (-982w + 1392)bX^3 + (24199902w + 22262526)X^2\\
&\hfill + (28113826282w - 39851778192)bX + 31^{10}\\
\bottomrule
\end{tabular}
\end{table}

\renewcommand{\MR}[1]{%
 MR \href{http://www.ams.org/mathscinet-getitem?mr=#1}{#1}.
}
\newcommand{\articlehref}[2]{\href{#1}{#2}}

\end{document}